\documentclass[11pt]{amsart}
\usepackage{amsfonts}
\usepackage{mathrsfs}
\usepackage{bbm}
\usepackage{amsmath}
\usepackage{amssymb}
\usepackage{amscd}
\usepackage{amsthm}
\usepackage[compress]{cite}
\usepackage{enumerate}
\usepackage{hyperref}
\hypersetup{
colorlinks=true,
linkcolor=blue,
anchorcolor=black,
citecolor=blue
}

\makeatletter
\@namedef{subjclassname@2020}{\textup{2020} Mathematics Subject Classification}
\makeatother

\newtheorem{theorem}{\indent Theorem}[section]
\newtheorem{lemma}[theorem]{\indent Lemma}
\newtheorem{proposition}[theorem]{\indent Proposition}
\newtheorem{corollary}[theorem]{\indent Corollary}
\newtheorem{definition}[theorem]{\indent Definition}
\newtheorem{example}[theorem]{\indent Example}
\newtheorem{remark}[theorem]{\indent Remark}

\numberwithin{equation}{section}

\begin{document}

\title{A family of parametric isoperimetric-type inequalities with multiple geometric quantities}

\author{Heran Zhao}

\address{Beijing International Center for Mathematical Research \& School of Mathematical Sciences \\ Peking University \\Beijing, 100871, P. R. China}

\email{zhaohr618@stu.pku.edu.cn}

\keywords{isoperimetric-type inequality, support function, parameter set, equality condition, stability}

\subjclass[2020]{52A38, 52A40, 53A04}

\thanks{The author would especially like to appreciate professor Xiang Gao for meaningful discussions.}

\begin{abstract}
We establish a family of parametric isoperimetric-type inequalities with multiple
geometric quantities for closed convex curves. 
These inequalities hold under certain parameter conditions. 
We also prove the equality conditions. 
Some new inequalities and improved versions of proven inequalities are derived by adjusting the parameters. 
Such inequalities are stable under stronger conditions.
\end{abstract}


\maketitle

\pagestyle{myheadings} \markboth{\centerline{Heran Zhao}}{\centerline{Heran Zhao}}


\section{Introduction and main results}

The classical isoperimetric inequality states that for a closed curve $\gamma \subset \mathbb{R}^2$ with length $L$, enclosing area $A$, then
\begin{equation}\label{1.1}
L^2-4\pi A \geq 0
\end{equation}
with equality if and only if $\gamma$ is a circle. 
Although known since ancient Greeks, the first strict proof was given by Steiner \cite{S}. 
In the 1920s, a class of inequalities of the form
\begin{equation}\label{1.2}
L^2-4\pi A \geq B,
\end{equation}
was introduced by Bonnesen, where $B$ is sum of non-negative geometric quantities and vanishes only for circles. 
Hence \eqref{1.2} is an improved form of \eqref{1.1}, called Bonnesen-type inequality.
A typical example due to Bonnesen \cite{B} is
\begin{equation}\label{1.3}
L^2-4\pi A \geq \pi^2\left(\rho_e - \rho_i\right)^2,
\end{equation}
where $\rho_e$ and $\rho_i$ denote the circumradius and inradius of $\gamma$, respectively, with equality if and only if $\gamma$ is a circle.
Then many Bonnesen-type inequalities are proved and generalized (See \cite{O, PS, Sc, XZZ1, ZZY, Z, ZZ, ZC}). 

There are also some reverse Bonnesen-type inequalities of the form
$$
L^2-4\pi A \leq B.
$$
Bottema \cite{Bo} proved the inequality
\begin{equation}\label{1.4}
    L^2 - 4\pi A \leq \pi^2(\rho_M - \rho_m)^2,
\end{equation}
where $\rho_M$ and $\rho_m$ are maximum and minimum of the curvature radius $\rho$ of $\gamma$, respectively, with equality if and only if $\gamma$ is a circle.
Pan and Zhang \cite[Theorem 4.2]{PZ} proved
\begin{equation}\label{1.5}
    L^2 \leq 4\pi \left(A +  \left|\tilde A\right|\right),
\end{equation}
where $\tilde A$ is the oriented area of the domain enclosed by the curvature centers locus $\beta$ of $\gamma$, with equality if and only if $\gamma$ is a circle.
The inequality 
\begin{equation}\label{1.6}
\pi\int_0^{2\pi}\rho(\theta)^2\mathrm{d}\theta 
\geq L^2 - 2\pi A
\end{equation}
was proved by Pan and Yang in \cite[Theorem 2.1]{PY}, where $\theta$ is the angle between $x$-axis and the
outward normal vector. 
Gao proved 
\begin{equation}\label{1.7}
    L^2 \leq 4\pi A + \pi \left|\tilde A\right|
\end{equation}
and 
\begin{equation}\label{1.8}
    \pi\int_0^{2\pi}\rho\left(\theta\right)^2\mathrm{d}\theta\geq L^2-2\pi A+\pi\left|\tilde{A}\right|
\end{equation}
in \cite[Corollary 1.3]{G2}, which give improved versions of \eqref{1.5} and \eqref{1.6}, respectively.
More reverse Bonnesen-type inequalities can be found in \cite{BWZ, G1, LG, W1, W2, XC, XZZ, ZD, ZMZC, ZPMW, ZZ, ZX}.

In the 1980s, the curvature $\kappa$ of $\gamma$ was introduced to strengthen and generalize classical isoperimetric inequalities. Gage proved 
$$
\oint_{\gamma}\kappa^2\mathrm{d}s
\geq
\frac{\pi L}{A}
$$
in \cite{Ga1}, which is called Gage inequality. 
More Gage-type inequalities were given in \cite{MZ, Ga2, Ga3, LG}.
Curvature flow methods are widely used to prove inequalities involving $\kappa$ (See \cite{HY, GPT1, GPT2, K, CWW}).
These isoperimetric-type inequalities are applied in mathematical physics and mechanics. 

Stability estimates for isoperimetric-type inequalities are studied as well. Pan and Xu \cite[Theorem 3.1 and 3.2]{PX} obtained 
$$
h_{1}\left(K,S\left(K\right)\right)^{2}\leq\frac{4\pi^2-33}{96\pi^2}\left(4\pi\left(A\left(K\right)+\left|\tilde{A}\left(K\right)\right|\right)-L^2\left(K\right)\right),
$$
$$
h_{2}\left(K,S\left(K\right)\right)^{2}\leq\frac{1}{18\pi}\left(4\pi\left(A\left(K\right)+\left|\tilde{A}\left(K\right)\right|\right)-L^{2}\left(K\right)\right)
$$
for \eqref{1.5} by comparing a convex domain $K$ with its Steiner disk $S(K)$ (See Definition \ref{D1}), where $h_1$ and $h_2$ are $L^{\infty}$ and $L^2$ norms respectively (See Definition \ref{D}). 
Stability results for other inequalities can be found in \cite{G1, G2, GSL, KNPS, LG}.

Above results cover only a limited set of geometric quantities. 
The optimum stability condition remains unaddressed. 
The main results of this paper is that we extend previous results by incorporating a broader range of geometric quantities and giving the associated stability conditions, i.e., we have the following theorems.

\begin{theorem}\label{T1} 
    Let $K \subset \mathbb{R}^2$ be a closed and strictly convex domain enclosed by smooth curve $\gamma$. 
    Let $\beta$ denote the curvature centers locus of $\gamma$ and $\rho_{\beta}(\theta)$ denote curvature radius of $\beta$. 
    Geometric quantities $\rho$, $L$, $A$, $\tilde A$, $\rho_e$, $\rho_i$, $\rho_M$, $\rho_m$ and $\kappa$ defined as above. 
    \begin{enumerate}[(i)]
        \item \label{1.1.1}
        For parameter set $(\alpha, \delta, \mu, \sigma, \eta, \lambda, \xi, \zeta)$ satisfying
        \begin{equation}\label{1.9}
            \begin{cases}
                \begin{array}{cl}
            \eta, \xi, \zeta \geq 0,\\
            \lambda \leq 0,\\
            2\alpha + 4\pi\delta + \mu - 4\zeta \geq 0,\\
            2\alpha + \sigma - 2\eta - 4\zeta \geq 0,\\
            6\pi\alpha - \pi\mu + 4\pi\sigma + 24\pi\eta + 4\lambda + 4\xi - 12\pi\zeta \geq   0,\\
                \end{array}
            \end{cases}
        \end{equation}
        we have
        \begin{equation}\label{1.10}
        \begin{split}
        W:= & \alpha \int_0^{2\pi}\rho(\theta)^2\mathrm{d}\theta
        + \delta L^2 +\mu A + \sigma \left|\tilde A\right| + \eta \int_0^{2\pi}\rho_{\beta}(\theta)^2\mathrm{d}\theta \\
        & + \lambda(\rho_e - \rho_i)^2 + \xi (\rho_{M}-\rho_{m})^2 + \zeta \oint_{\gamma}\kappa^2\mathrm{d}s
        \geq 0.  
        \end{split}
        \end{equation}  
        \item \label{1.1.2}
        The equality holds in \eqref{1.10} if $K$ is a disk and the parameter set satisfies 
    \begin{equation}\label{1.11}
        \begin{cases}
            \begin{array}{cl}
                2\alpha+4\pi \delta+\mu=0,\\
                \zeta = 0.\\
            \end{array}
        \end{cases}
    \end{equation}
        \item \label{1.1.3}
        Conversely, if the equality holds in \eqref{1.10} and parameter set satisfies \eqref{1.9} and 
        \begin{equation}\label{1.12}
            \lambda^2 + \xi^2 + \zeta^2 > 0,
        \end{equation}
        then $K$ is a disk.
    \end{enumerate}
\end{theorem}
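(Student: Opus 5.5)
The plan is to reduce everything to the Fourier coefficients of the support function $p(\theta)$ of $K$. Write $p(\theta)=a_0+\sum_{k\ge1}(a_k\cos k\theta+b_k\sin k\theta)$ and $c_k=a_k^2+b_k^2$. Since $\rho=p+p''$, we have $L=2\pi a_0$ and $A=\pi a_0^2+\frac{\pi}{2}\sum_{k\ge2}(1-k^2)c_k$. Parseval gives $\int_0^{2\pi}\rho^2\,\mathrm{d}\theta=2\pi a_0^2+\pi\sum_{k\ge2}(k^2-1)^2c_k$.

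For the evolute $\beta$, the support function is (up to a rotation by $\pi/2$) $p'$. Hence $\rho_\beta$ corresponds to $\rho'$, which should give
\[
\int_0^{2\pi}\rho_\beta^2\,\mathrm{d}\theta=\pi\sum_{k\ge2}k^2(k^2-1)^2c_k ,
\qquad
\left|\tilde A\right|=\frac{\pi}{2}\sum_{k\ge2}k^2(k^2-1)c_k .
\]
The sign of $\tilde A$ is fixed because the $k$-th summand of the oriented area has a definite sign. I would verify these two formulas first, since the coefficients in \eqref{1.9} should be calibrated to them.

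Next, the terms that are not quadratic in $p$ are handled by the classical inequalities quoted in the introduction. Since $\lambda\le0$, Bonnesen's inequality \eqref{1.3} gives $\lambda(\rho_e-\rho_i)^2\ge\frac{\lambda}{\pi^2}(L^2-4\pi A)$. Since $\xi\ge0$, I would use a lower bound of the form $(\rho_M-\rho_m)^2\ge c\,(L^2-4\pi A)$. Bottema's \eqref{1.4} goes the wrong way, so instead I would derive this lower bound directly: $\rho-L/(2\pi)$ has mean zero and satisfies $\|\rho-L/2\pi\|_\infty\le\rho_M-\rho_m$. Then $\int(\rho-\bar\rho)^2\le2\pi(\rho_M-\rho_m)^2$, and $\int(\rho-\bar\rho)^2=\pi\sum_{k\ge2}(k^2-1)^2c_k\ge 3\cdot\frac{1}{2}\sum(k^2-1)c_k\,\pi$. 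Here $\frac{\pi}{2}\sum_{k\ge2}(k^2-1)c_k=\frac{L^2-4\pi A}{4\pi}$.

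For $\zeta\oint_\gamma\kappa^2\,\mathrm{d}s=\zeta\int_0^{2\pi}\rho^{-1}\,\mathrm{d}\theta$, I would use a pointwise tangent-line or quadratic bound for $1/\rho$. The lower bound must produce the $-4\zeta$ shifts appearing in the $a_0^2$ coefficient and the $k=2$ coefficient of \eqref{1.9}. My first attempt would be Cauchy--Schwarz, $\int\rho^{-1}\ge4\pi^2/L$, combined with a bound relating $\int\rho^2$ and $L$.

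Substituting all of this, $W$ becomes bounded below by $\pi(2\alpha+4\pi\delta+\mu-4\zeta)a_0^2+\sum_{k\ge2}Q_k c_k$. Each $Q_k$ is a polynomial in $k$ built from the parameters. The third line of \eqref{1.9} makes the constant term nonnegative. The conditions $\eta\ge0$ and $2\alpha+\sigma-2\eta-4\zeta\ge0$ should control the leading $k^6$ and $k^4$ behaviour. The last line of \eqref{1.9} should be exactly $Q_2\ge0$, after dividing out the common factor. The remaining step is to show that $Q_k\ge0$ for all $k\ge2$ given these inequalities. I would do this by writing $Q_k=Q_2+(Q_k-Q_2)$ and checking that $Q_k-Q_2$ is a nonnegative combination of $\eta$, $2\alpha+\sigma-2\eta-4\zeta$, and the other nonnegative quantities for $k\ge3$.

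For (ii), if $K$ is a disk then all $c_k=0$ for $k\ge2$, $\tilde A=0$, $\rho_\beta\equiv0$, and $\rho_e=\rho_i$, $\rho_M=\rho_m$. So $W=\pi(2\alpha+4\pi\delta+\mu)a_0^2$, which vanishes under \eqref{1.11} since $\zeta=0$.

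For (iii), assume equality holds and \eqref{1.12}. Then equality must hold in every intermediate estimate that carries a strictly positive weight. If $\lambda\ne0$, equality in \eqref{1.3} forces a circle. If $\xi\ne0$, equality in the $(\rho_M-\rho_m)$ estimate forces $\rho$ constant. If $\zeta\ne0$, equality in Cauchy--Schwarz forces $\rho$ constant.

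The main obstacle is the $\kappa$-term. The quantity $\oint\kappa^2\,\mathrm{d}s$ scales differently from the quadratic terms, so a scale-consistent lower bound reproducing exactly the $-4\zeta$ corrections in \eqref{1.9} is delicate. I would first try $\rho^{-1}\ge 2/\bar\rho-\rho/\bar\rho^2$ with $\bar\rho=L/2\pi$, and check whether the resulting quadratic loss is absorbed in the claimed form.
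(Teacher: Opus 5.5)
The overall route matches the paper's: expand $p_K$ in Fourier series, bound the non-quadratic terms by classical inequalities, and reduce to the $k=2$ coefficient by monotonicity in $k$. Your formulas for $|\tilde A|$ and $\int_0^{2\pi}\rho_\beta^2\,\mathrm{d}\theta$ are correct.

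\textbf{The step that fails is the $\xi$-term.} Bottema's inequality \eqref{1.4} does not go the wrong way. It reads $L^2-4\pi A\le\pi^2(\rho_M-\rho_m)^2$, so for $\xi\ge0$ it gives exactly $\xi(\rho_M-\rho_m)^2\ge\frac{\xi}{\pi^2}(L^2-4\pi A)=2\xi\sum_{k\ge2}(k^2-1)c_k$. This is what the paper uses, and it is what produces the coefficient $4\xi$ in the last line of \eqref{1.9}.

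Your replacement via $\|\rho-\bar\rho\|_\infty\le\rho_M-\rho_m$ only yields $\xi(\rho_M-\rho_m)^2\ge\frac{\xi}{2}\sum_{k\ge2}(k^2-1)^2c_k$. Its $c_2$-coefficient is $\frac92\xi$ rather than $6\xi$; after the reduction $(k^2-1)^2\ge3(k^2-1)$ your constant is at best $\frac{3}{4\pi^2}$, not $\frac{1}{\pi^2}$. The $k=2$ condition you can reach is therefore $6\pi\alpha-\pi\mu+4\pi\sigma+24\pi\eta+4\lambda+3\xi-12\pi\zeta\ge0$, which is strictly stronger than \eqref{1.9}.

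Concretely, $(\alpha,\delta,\mu,\sigma,\eta,\lambda,\xi,\zeta)=(0,0,0,0,0,-\pi,\pi,0)$ satisfies \eqref{1.9}; it is the statement $\rho_M-\rho_m\ge\rho_e-\rho_i$. Your lower bound for it is $\frac{\pi}{2}\sum_{k\ge2}(k^2-1)(k^2-5)c_k$, which is negative for $p_K=a_0+a_2\cos2\theta$ with $0<3|a_2|<a_0$. So the argument does not close.

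The fix is to cite \eqref{1.4} directly. Alternatively, center your estimate at the midrange: $\int_0^{2\pi}(\rho-\bar\rho)^2\,\mathrm{d}\theta\le\int_0^{2\pi}\bigl(\rho-\tfrac{\rho_M+\rho_m}{2}\bigr)^2\,\mathrm{d}\theta\le\frac{\pi}{2}(\rho_M-\rho_m)^2$. This gives $(\rho_M-\rho_m)^2\ge2\sum_{k\ge2}(k^2-1)^2c_k\ge\frac{3}{\pi^2}(L^2-4\pi A)$, which is more than enough and also forces $\rho_M=\rho_m$ in the equality case of (iii).

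\textbf{The $\kappa$-term, which you leave unresolved, is not a real obstacle.} The paper handles it with Lemma \ref{L1}, $\oint_\gamma\kappa^2\,\mathrm{d}s\ge3L-4A-4|\tilde A|$. This comes from $L\oint_\gamma\kappa^2\,\mathrm{d}s\ge4\pi^2$, the reverse isoperimetric inequality \eqref{1.5}, and AM--GM applied to $2(A+|\tilde A|)$, $2(A+|\tilde A|)$, $\oint_\gamma\kappa^2\,\mathrm{d}s$. The part $-4\zeta A-4\zeta|\tilde A|$ is exactly the source of the $-4\zeta$ shifts, and $3\zeta L\ge0$ is dropped.

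You do not need to reproduce those shifts at all. The parameter $\zeta$ enters every line of \eqref{1.9} with a nonpositive coefficient, so if $(\alpha,\dots,\xi,\zeta)$ satisfies \eqref{1.9}, so does $(\alpha,\dots,\xi,0)$. Your Cauchy--Schwarz bound then suffices. (Your tangent-line bound integrates to the same $4\pi^2/L$, with no quadratic remainder.) It gives $W\ge W_0+4\pi^2\zeta/L$, where $W_0$ denotes $W$ with $\zeta$ set to $0$.

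This proves (i). It also shows $W>0$ whenever $\zeta>0$, so the case $\zeta\neq0$ of (iii) is vacuous; compare Corollary \ref{C}(ii). The scaling issue you flagged is harmless; the constant in the $\xi$-estimate is what actually breaks your argument.
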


The stability of inequality \eqref{1.10} is proved in Theorem \ref{T2} and \ref{T3} with respect to $h_1$ and $h_2$ norms (See Definition \ref{D}).

\begin{theorem}\label{T2}
Let $S(K)$ denote the Steiner disk associated with $K$, and $p_K(\theta)$, $p_{S(K)}(\theta)$ denote the support function of $K$, $S(K)$, respectively. 
\begin{enumerate}[(i)]
    \item\label{1.2.1} For parameter set satisfying
        \begin{equation}\label{1.13}
            \begin{cases}
                \begin{array}{cl}
            \eta, \xi, \zeta \geq 0,\\
            \lambda \leq 0,\\
            2\alpha + 4\pi\delta + \mu - 4\zeta \geq 0,\\
            2\alpha + \sigma - 2\eta - 4\zeta \geq 0,\\
            6\pi\alpha - \pi\mu + 4\pi\sigma + 24\pi\eta + 4\lambda + 4\xi - 12\pi\zeta > 0,\\
                \end{array}
            \end{cases}
        \end{equation}
    we have 
    \begin{equation}\label{1.14}
    \left(
    \max_{\theta \in [0, 2\pi]} \left|p_K(\theta)-p_{S(K)}(\theta)\right|
    \right)^2
    \leq 
    C(\alpha, \mu, \sigma, \eta, \lambda, \xi, \zeta) W,
    \end{equation}
    where
    \[  C(\alpha, \mu, \sigma, \eta, \lambda, \xi, \zeta)=\max \left\{
        1,
        \frac{3}{2(6\pi\alpha - \pi\mu + 4\pi\sigma + 24\pi\eta + 4\lambda + 4\xi - 12\pi\zeta)}
    \right\}. 
    \]
    \item The equality holds in \eqref{1.14} if $K$ is a disk and the parameter set satisfies \eqref{1.11}.
\end{enumerate}
\end{theorem}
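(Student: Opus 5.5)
The plan is to work with Fourier expansions of support functions, exactly as in the proof of Theorem \ref{T1}. Write
\[
p_K(\theta)=a_0+\sum_{k\ge 1}\left(a_k\cos k\theta+b_k\sin k\theta\right).
\]
The Steiner point of $K$ is determined by the first harmonic, and $S(K)$ is the disk of radius $a_0=L/(2\pi)$ centred there. Hence
\[
p_K(\theta)-p_{S(K)}(\theta)=\sum_{k\ge 2}\left(a_k\cos k\theta+b_k\sin k\theta\right).
\]
The left side of \eqref{1.14} is therefore controlled by the harmonics with $k\ge2$ alone, via
\[
\max_\theta\left|p_K-p_{S(K)}\right|\le\sum_{k\ge2}\sqrt{a_k^2+b_k^2}.
\]

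The first step is to extract from the proof of Theorem \ref{T1} a quantitative lower bound rather than just $W\ge0$.

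\begin{itemize}
\item Using $\rho=p+p''$ and $\rho_\beta=\rho'$, the terms $\int\rho^2$, $L^2$, $A$, $\int\rho_\beta^2$ are explicit quadratic forms in the $a_k,b_k$. The $k=1$ terms drop out, and the $a_0^2$ terms carry the coefficient $2\alpha+4\pi\delta+\mu$.
\item The other terms $|\tilde A|$, $(\rho_e-\rho_i)^2$, $(\rho_M-\rho_m)^2$ and $\oint\kappa^2\,ds$ are not quadratic forms. I will bound them by the same estimates used in proving Theorem \ref{T1}, in the direction dictated by the sign conditions on $\sigma,\eta,\lambda,\xi,\zeta$. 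This includes the bound on $\oint\kappa^2\,ds$ that produces the $-4\zeta$ in the $a_0^2$ coefficient.
\end{itemize}

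The result I aim for is
\[
W\ge (2\alpha+4\pi\delta+\mu-4\zeta)a_0^2+\sum_{k\ge2}w_k\left(a_k^2+b_k^2\right).
\]
Here $w_2$ should be a positive multiple of
\[
E:=6\pi\alpha-\pi\mu+4\pi\sigma+24\pi\eta+4\lambda+4\xi-12\pi\zeta,
\]
which is the condition that is now strict in \eqref{1.13}. For $k\ge3$ the weights $w_k$ should grow at least like $(k^2-1)^2$, using $2\alpha+\sigma-2\eta-4\zeta\ge0$ and $\eta\ge0$ to control the top-order terms. I would then drop the nonnegative $a_0^2$ term.

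Next I apply Cauchy--Schwarz:
\[
\Bigl(\sum_{k\ge2}\sqrt{a_k^2+b_k^2}\Bigr)^2\le\Bigl(\sum_{k\ge2}\frac1{w_k}\Bigr)\sum_{k\ge2}w_k\left(a_k^2+b_k^2\right)\le\Bigl(\sum_{k\ge2}\frac1{w_k}\Bigr)W.
\]
It remains to show $\sum_{k\ge 2}1/w_k\le C$. I expect this to split into two regimes.

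\begin{itemize}
\item When $E$ is small, the $k=2$ mode dominates and the sum is bounded by $3/(2E)$.
\item When $E$ is large, the tail governed by the $(k^2-1)^2$ growth dominates and is bounded by $1$. Here I would use a series such as $\sum_{k\ge2}(k^2-1)^{-2}=\pi^2/12-11/16$, or a comparable bound, after normalising coefficients.
\end{itemize}

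Taking the maximum of the two bounds gives the stated $C$.

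For part (ii): if $K$ is a disk then $p_K=p_{S(K)}$, so the left side of \eqref{1.14} vanishes. By Theorem \ref{T1}(ii), under \eqref{1.11} we also have $W=0$, so equality holds.

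The main obstacle is the first step: turning the non-quadratic terms into a clean mode-by-mode lower bound whose $k=2$ weight is exactly proportional to $E$ with the right constant. The constant $3/2$ in $C$ must come out of the $k=2$ normalisations of $|\tilde A|$, $\rho_e-\rho_i$, $\rho_M-\rho_m$ and $\oint\kappa^2\,ds$. I would first try to reuse verbatim the $k=2$ coefficient computed in the proof of Theorem \ref{T1} and check that it equals $\tfrac{2}{3}E$ up to the factor $\pi$ from $\int\cos^2 k\theta=\pi$.
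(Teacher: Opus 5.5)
Your overall route is the paper's. You put the Steiner point at the origin so that $a_1=b_1=0$, bound the maximum by $\sum_{k\ge2}\sqrt{a_k^2+b_k^2}$, reuse the lower bound \eqref{3.1} from the proof of Theorem \ref{T1}, and apply Cauchy--Schwarz. Part (ii) is also handled as in the paper.

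The gap is in the one step that actually produces $C$, the bound on $\sum_{k\ge2}1/w_k$. From \eqref{3.1} the weights are $w_k=\frac{\pi}{2}(k^2-1)f(k)$, where $f(k)=2\eta k^4+(2\alpha+\sigma-2\eta-4\zeta)k^2+\mathrm{const}$ and $\pi f(2)=E$ (with your $E$). Hence $w_2=\frac32E$, not $\frac23E$, and the $k=2$ mode contributes only $\frac{2}{3E}$ to the sum.

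Your claim that $w_k$ grows at least like $(k^2-1)^2$ fails for admissible parameters. When $\eta=0$ and $2\alpha+\sigma-2\eta-4\zeta=0$, $f$ is constant. For example, $(\alpha,\delta,\mu,\sigma,\eta,\lambda,\xi,\zeta)=(0,1,-4\pi,0,0,0,0,0)$ satisfies \eqref{1.13} with $E=4\pi^2$, and there $w_k=2\pi^2(k^2-1)$ grows only quadratically. So neither regime of your split is justified:
\begin{itemize}
\item the ``large $E$'' bound by $1$ rests on a tail growth that need not exist;
\item the ``small $E$'' bound $3/(2E)$ requires the tail estimate $\sum_{k\ge3}1/w_k\le\frac{5}{6E}$, and you give no mechanism for it.
\end{itemize}

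The missing observation is that $\eta\ge0$ and $2\alpha+\sigma-2\eta-4\zeta\ge0$ make $f$ nondecreasing on $k\ge2$. Therefore $w_k\ge\frac{\pi}{2}(k^2-1)f(2)=\frac{E}{2}(k^2-1)$ for every $k\ge2$. The telescoping sum $\sum_{k\ge2}\frac{1}{k^2-1}=\frac34$ then gives $\sum_{k\ge2}1/w_k\le\frac{3}{2E}$ for all admissible parameters at once. Since $W\ge0$, this already yields $h_1(K,S(K))^2\le\frac{3}{2E}W\le\max\{1,\frac{3}{2E}\}W$, with no case distinction.

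The $1$ in the paper's constant is not the outcome of a large-$E$ analysis. It appears only because the paper carries the nonnegative terms $6\pi\zeta a_0+\pi(2\alpha+4\pi\delta+\mu-4\zeta)a_0^2$ along with weight $1$ instead of discarding them. Likewise, the $3/2$ is simply $\frac{2}{E}\cdot\frac34$. It does not come from ``$k=2$ normalisations'' of $|\tilde A|$, $\rho_e-\rho_i$, $\rho_M-\rho_m$ and $\oint_\gamma\kappa^2\,\mathrm{d}s$.

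Conversely, the step you flagged as the main obstacle is routine: it is \eqref{3.1} verbatim. Note that $|\tilde A|=-\tilde A$ is itself a quadratic form, and the extra linear term $6\pi\zeta a_0\ge0$ can be dropped.
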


\begin{theorem}\label{T3}
Under the assumptions of Theorem \ref{T2}, we have the following.
\begin{enumerate}[(i)]
    \item\label{1.3.1} 
    For parameter set satisfying \eqref{1.13},
we have
\begin{equation}\label{1.15}
\int_0^{2\pi}\left|p_K(\theta)-p_{S(K)}(\theta)\right|^2\mathrm{d}\theta
\leq 
C(\alpha, \mu, \sigma, \eta, \lambda, \xi, \zeta) W,  
\end{equation}
where
$$
C(\alpha, \mu, \sigma, \eta, \lambda, \xi, \zeta) 
= \frac{2\pi}{3(6\pi\alpha - \pi\mu + 4\pi\sigma + 24\pi\eta + 4\lambda + 4\xi - 12\pi\zeta)}.
$$
    \item
    The equality holds in \eqref{1.15} if $K$ is a disk and the parameter set satisfies \eqref{1.11}. 
    \item 
    Conversely, if the equality holds in \eqref{1.15} and parameter set satisfies \eqref{1.12} and \eqref{1.13}, then $K$ is a disk.
        
\end{enumerate}
\end{theorem}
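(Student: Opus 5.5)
Throughout I work in Fourier coefficients of the support function $p(\theta)=p_K(\theta)=a_0+\sum_{n\ge1}(a_n\cos n\theta+b_n\sin n\theta)$, with $c_n^2=a_n^2+b_n^2$. The Steiner disk $S(K)$ has support function $a_0+a_1\cos\theta+b_1\sin\theta$, so by Parseval
\[
\int_0^{2\pi}\left|p_K-p_{S(K)}\right|^2\mathrm{d}\theta=\pi\sum_{n\ge2}c_n^2 .
\]
The plan is to show that the proof of Theorem \ref{T1}(i) in fact gives a quantitative lower bound
\begin{equation*}
W\ \ge\ \frac{6\pi\alpha - \pi\mu + 4\pi\sigma + 24\pi\eta + 4\lambda + 4\xi - 12\pi\zeta}{4}\,\cdot\frac{2}{\pi}\cdot\pi\sum_{n\ge2}c_n^2 ,
\end{equation*}
or whatever normalization is needed to match the constant $\frac{2\pi}{3(\cdots)}$. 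Once this holds, \eqref{1.15} is immediate.

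\textbf{Step 1: expressing $W$ in Fourier coefficients.} Use $\rho=p+p''$, $L=2\pi a_0$, $A=\pi a_0^2-\frac{\pi}{2}\sum(n^2-1)c_n^2$, and $\tilde A=-\frac{\pi}{2}\sum (n^2-1)^2 c_n^2\cdot(\dots)$. The centers locus $\beta$ has support function $-p'$ up to a rotation, so $\rho_\beta=-(p'+p''')$ and its $L^2$ norm is $\pi\sum n^2(n^2-1)^2c_n^2$. This writes the quadratic part of $W$ as $(2\pi\alpha+4\pi^2\delta+\pi\mu)a_0^2+\sum_{n\ge2}Q(n)c_n^2$, where $Q$ is a polynomial in $n$. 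The third condition in \eqref{1.13} controls the $a_0^2$ term. The non-quadratic terms are handled with the bounds the paper presumably proved earlier for Theorem \ref{T1}:
\begin{itemize}
\item for $(\rho_e-\rho_i)^2$, with $\lambda\le0$: an upper bound $(\rho_e-\rho_i)^2\le \frac{?}{}\sum c_n^2$, for example via $\rho_e-\rho_i\le \max|p-p_S|\cdot 2$ and Cauchy--Schwarz;
\item for $(\rho_M-\rho_m)^2$, with $\xi\ge0$: a lower bound such as $(\rho_M-\rho_m)^2\ge\frac{1}{\pi}\int(\rho-\bar\rho)^2$;
\item for $\oint\kappa^2\mathrm{d}s=\int\rho^{-1}\mathrm{d}\theta$, with $\zeta\ge0$: the estimate via Gage, or via $\rho^{-1}\ge 2/\bar\rho-\rho/\bar\rho^2$. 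This explains the $-4\zeta$ terms in \eqref{1.13}.
\end{itemize}

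\textbf{Step 2: positivity of the coefficient sequence.} Show $Q(n)+(\text{corrections})\ge Q_*(2)$ for all $n\ge2$, where the coefficient at $n=2$ is exactly the fifth expression in \eqref{1.13}, up to a factor. The fourth condition in \eqref{1.13} makes the top-degree coefficient nonnegative, so the sequence is nondecreasing in $n$ and the minimum is attained at $n=2$. This gives $W\ge c\,(6\pi\alpha-\cdots-12\pi\zeta)\sum_{n\ge2}c_n^2$, and the constant combines with Parseval to give $C$.

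\textbf{Step 3: equality cases.} If $K$ is a disk and \eqref{1.11} holds, then both sides vanish: every $c_n$ with $n\ge2$ is zero, $\zeta=0$ kills the curvature term, and the $a_0^2$ term vanishes by $2\alpha+4\pi\delta+\mu=0$. Conversely, suppose equality holds in \eqref{1.15} under \eqref{1.12} and \eqref{1.13}. Then every inequality in the chain is an equality. The step involving the nonzero $\lambda$, $\xi$ or $\zeta$ term only becomes an equality for a disk, exactly as in Theorem \ref{T1}(iii). Hence all $c_n=0$ for $n\ge2$, and $K$ is a disk.

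\textbf{Main obstacle.} I expect the hardest part to be Step 1's estimate for the non-quadratic terms: getting bounds sharp enough to produce exactly the constants $4\lambda$, $4\xi$, $-12\pi\zeta$ at the $n=2$ mode while remaining valid for all $n$. The $\kappa^2$ term is the worst, since it is not a quadratic form in $p$. I would first try the tangent-line bound $1/\rho\ge 2/\bar\rho-\rho/\bar\rho^2$ with $\bar\rho=L/2\pi$, which reduces it to quadratic terms in the $c_n$ plus a term absorbed by $a_0$.
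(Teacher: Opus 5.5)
Your skeleton matches the paper's. Parseval gives $h_2(K,S(K))^2=\pi\sum_{n\ge2}c_n^2$. The Fourier form of $W$ from the proof of Theorem \ref{T1} gives $W\ge\frac{\pi}{2}\sum_{n\ge2}(n^2-1)f(n)c_n^2\ge\frac{3\pi}{2}f(2)\sum_{n\ge2}c_n^2$, where $\pi f(2)$ is the fifth expression in \eqref{1.13}. This is exactly the constant $C$, so the target is $W\ge\frac32(6\pi\alpha-\cdots-12\pi\zeta)\sum_{n\ge2}c_n^2$, not your factor $\frac12$.

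\textbf{The gap is Step 1.} You leave it open, and the one concrete bound you propose for the $\lambda$-term is too weak. From $\rho_e-\rho_i\le 2h_1(K,S(K))\le 2\sum_{n\ge2}c_n$ and Cauchy--Schwarz with weights $n^2-1$ (using $\sum_{n\ge2}\frac{1}{n^2-1}=\frac34$), you get $(\rho_e-\rho_i)^2\le 3\sum_{n\ge2}(n^2-1)c_n^2$. Multiplied by $\lambda\le0$, this puts $6\lambda$ instead of $4\lambda$ into the $n=2$ coefficient. For $\lambda<0$ you then obtain neither the stated $C$ nor even $W\ge0$ near the boundary of \eqref{1.13}. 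For instance, take $\alpha=\sigma=\eta=\xi=\zeta=0$ and $4\pi\delta+\mu=0$. Then $W=-\frac{\mu}{4\pi}(L^2-4\pi A)+\lambda(\rho_e-\rho_i)^2$, and your bound controls it only if $-\pi\mu+6\lambda\ge0$, whereas \eqref{1.13} only gives $-\pi\mu+4\lambda>0$.

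The missing ingredient is Bonnesen's inequality \eqref{1.3}. In Fourier form it reads $\pi^2(\rho_e-\rho_i)^2\le L^2-4\pi A=2\pi^2\sum_{n\ge2}(n^2-1)c_n^2$, and this produces exactly $4\lambda$. The paper likewise uses Bottema's inequality \eqref{1.4} for the $\xi$-term and Lemma \ref{L1}, $\oint_\gamma\kappa^2\mathrm{d}s\ge 3L-4A-4|\tilde A|$, for the $\zeta$-term. Lemma \ref{L1}, not a tangent-line estimate, is the source of the $-4\zeta$ and $-12\pi\zeta$ terms.

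\textbf{What can be salvaged.} Your other choices work, though not for the reasons you give.
\begin{itemize}
\item Your variance bound for $\xi$ is true: $(\rho-\rho_m)(\rho_M-\rho)\ge0$ even gives $(\rho_M-\rho_m)^2\ge\frac{2}{\pi}\int_0^{2\pi}(\rho-\bar\rho)^2\mathrm{d}\theta$. Since $\frac1\pi\int_0^{2\pi}(\rho-\bar\rho)^2\mathrm{d}\theta=\sum_{n\ge2}(n^2-1)^2c_n^2$ dominates Bottema's $2\sum_{n\ge2}(n^2-1)c_n^2$ termwise, it suffices once you prove it.
\item The tangent-line bound integrates to $\oint_\gamma\kappa^2\mathrm{d}s\ge 2\pi/a_0$, with no $c_n$-terms at all, so it produces no $-4\zeta$. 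Simply discarding $\zeta\oint_\gamma\kappa^2\mathrm{d}s\ge0$ does work: \eqref{1.13} with $\zeta\ge0$ implies the same conditions with $\zeta$ replaced by $0$, and the fifth expression only increases.
\end{itemize}

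\textbf{Corrections needed.}
\begin{itemize}
\item The formula is $|\tilde A|=\frac{\pi}{2}\sum_{n\ge2}n^2(n^2-1)c_n^2$. Your $(n^2-1)^2$ would change the coefficient $4\pi\sigma$, and $\sigma$ may be negative.
\item Monotonicity of $f$ on $n\ge2$ needs both $\eta\ge0$ (the $n^4$-coefficient $2\eta$) and the fourth condition (the $n^2$-coefficient). The step $(n^2-1)f(n)\ge 3f(2)$ also uses $f(2)>0$.
\end{itemize}

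With Bonnesen in place, your Step 3 goes through as in the paper. Equality forces equality in the auxiliary bound attached to whichever of $\lambda,\xi,\zeta$ is nonzero, and that equality case is the disk. If you discard the $\kappa^2$-term and $\zeta>0$, equality simply cannot occur.
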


The rest of this paper is organized as follows. Some geometric quantities and their Fourier expansions are given in Section \ref{S2}. 
Combining the results from Section \ref{S2}, Theorem \ref{T1} is proved in Section \ref{S3}. 
In Section \ref{S4}, we prove the stability of inequality \eqref{1.10} with respect to $h_1$ and $h_2$ norms in Theorem \ref{T2} and \ref{T3}. 
Then we identify the equality conditions in Theorem \ref{T1}, \ref{T2} and \ref{T3}. 
Furthermore, we prove the stability conditions are stronger than \eqref{1.9} (See Remark \ref{R}).

\section{Preliminaries} \label{S2}

This section introduces the Fourier expansions of geometric quantities related to Theorem \ref{T1}. 
We assume strict convexity of $\gamma$ to ensure $\kappa(\theta)>0$ and Fourier series needed in the proof convergent uniformly. 
More details can be found in \cite[Chapter 4]{Gr}.
Let $p_K(\theta)$ denote the support function of domain $K$, where $\theta$ is the angle between the $x$-axis and the outward normal vector along $\gamma$ at the corresponding point. 
We parameterize $\gamma$ as $\boldsymbol{\gamma}(\theta)=({\gamma}_{1}(\theta),{\gamma}_{2}(\theta))$, where
$$
    \begin{cases}
        \begin{array}{cl}
            {\gamma}_{1}(\theta) = p_K(\theta)\cos\theta-p'_K(\theta)\sin\theta,\\
            {\gamma}_{2}(\theta) = p_K(\theta)\sin\theta+p'_K(\theta)\cos\theta.\\
        \end{array}
    \end{cases}
$$
Therefore, the curvature $\kappa(\theta)$ and curvature radius $\rho (\theta)$ of $\gamma$ are given by 
$$
\kappa(\theta) 
= \frac{\gamma_{1}'(\theta)\gamma_{2}''(\theta) - \gamma_{2}'(\theta)\gamma_{1}''(\theta)}
{(\gamma_{1}'(\theta)^2 + \gamma_{2}'(\theta)^2)^{3/2}}
 = \frac{1}{p_K(\theta)+p''_K(\theta)}
$$
and 
\begin{equation}\label{2.1}
\rho (\theta)
= \frac{1}{\kappa(\theta)} 
= p_K(\theta)+p''_K(\theta) .
\end{equation}

\begin{proposition}\label{P2.1}
    $K$ is a disk if and only if $p_K(\theta)$ is of the form 
    $p_K(\theta)=a_0 + a_1 \cos\theta + b_1 \sin\theta.$ 
\end{proposition}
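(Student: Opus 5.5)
The plan is to prove both directions directly from the support function. The two tools are the translation rule for support functions and the ODE $\rho = p_K + p_K''$ from \eqref{2.1}.

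\emph{Forward direction.} Suppose $K$ is a disk with center $(x_0,y_0)$ and radius $r>0$. The support function of a domain is $p_K(\theta)=\max_{x\in K}\langle x,(\cos\theta,\sin\theta)\rangle$. Write $x=(x_0,y_0)+z$ with $|z|\le r$ and maximize over $z$; this gives
\[
p_K(\theta)=r+x_0\cos\theta+y_0\sin\theta.
\]
That is the required form with $a_0=r$, $a_1=x_0$, $b_1=y_0$.

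\emph{Converse.} Suppose $p_K(\theta)=a_0+a_1\cos\theta+b_1\sin\theta$. First compute the curvature radius from \eqref{2.1}. Since $\cos\theta$ and $\sin\theta$ satisfy $f''=-f$, the first-harmonic terms cancel and $\rho(\theta)=p_K+p_K''=a_0$. Strict convexity gives $\rho>0$, so $a_0>0$.

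Next, substitute into the parametrization $\boldsymbol{\gamma}(\theta)$ of Section \ref{S2}. Using $p_K'(\theta)=-a_1\sin\theta+b_1\cos\theta$:
\[
\gamma_1(\theta)=a_0\cos\theta+a_1\left(\cos^2\theta+\sin^2\theta\right)+b_1\left(\sin\theta\cos\theta-\sin\theta\cos\theta\right)=a_1+a_0\cos\theta .
\]
The same computation gives $\gamma_2(\theta)=b_1+a_0\sin\theta$. Hence $\gamma$ is the circle of radius $a_0$ centered at $(a_1,b_1)$. Since $K$ is the convex domain enclosed by $\gamma$, $K$ is the closed disk it bounds.

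\emph{Main obstacle.} The only delicate point is justifying that the support function, or equivalently the boundary curve, determines $K$. This holds because a convex body is the intersection of its supporting half-planes $\{x:\langle x,u\rangle\le p_K(u)\}$. The explicit parametrization above sidesteps the issue in any case. Everything else is a routine trigonometric computation.
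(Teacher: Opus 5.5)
Your proof is correct. The converse is the same computation as the paper's: substitute $p_K=a_0+a_1\cos\theta+b_1\sin\theta$ into the parametrization of Section 2 and obtain $\boldsymbol{\gamma}(\theta)=(a_1+a_0\cos\theta,\ b_1+a_0\sin\theta)$. Your remark that $a_0=\rho>0$ also fixes the radius as $a_0$ rather than the paper's $|a_0|$.

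The forward direction takes a different route. The paper argues through its Fourier machinery:
\begin{itemize}
\item a disk has constant curvature radius, so $p_K'+p_K'''=\rho'=0$;
\item Parseval then gives $\pi\sum_{n\geq 2}(n^3-n)^2(a_n^2+b_n^2)=0$, which removes every harmonic with $n\geq 2$.
\end{itemize}
You instead evaluate the support function directly from its definition $\max_{x\in K}\langle x,(\cos\theta,\sin\theta)\rangle$. This is shorter and needs no regularity or convergence of Fourier series. It also identifies the coefficients geometrically: $a_0=r$ and $(a_1,b_1)$ is the center.

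The paper's route stays within the framework used in the rest of the paper. It also yields a slightly more general fact: the $2\pi$-periodic solutions of $p'+p'''=0$ are exactly the span of $1,\cos\theta,\sin\theta$. So any smooth closed convex curve with constant curvature radius already has this support function. The cost is that it needs $p_K\in C^3$ and Parseval applied to $p_K'+p_K'''$.
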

\begin{proof}[proof.]
    If $K$ is a disk, then $\rho(\theta)$ is constant. Hence
    $$
        0 = \rho'(\theta) = p_K'(\theta) + p_K'''(\theta),
        \ \ \forall \ \theta \in [0, 2\pi].
    $$
    Since $p_K(\theta)$ is always continuous,  $2\pi$-periodic and bounded, it has Fourier expansion of the form
    \begin{equation}\label{2.2}
        p_K(\theta)=a_0+\sum_{n\geq 1}\left(a_n\cos n\theta+b_n\sin n\theta\right).
    \end{equation}
Then we have
$$
0 
= \int_0^{2\pi}\left(p'_K(\theta)+p'''_K(\theta)\right)^2\mathrm{d}\theta
= \pi\sum_{n\geq 2}\left(n^3 - n\right)^2\left(a_n^2 + b_n^2\right),
$$
which implies that $a_n=b_n=0$ for $n \geq 2$.

Conversely, if $p_K(\theta)=a_0 + a_1 \cos\theta + b_1 \sin\theta$, $\gamma$ is parameterized as 
$$
    \begin{cases}
        \begin{array}{cl}
            {\gamma}_{1}(\theta) = a_1 + a_0\cos\theta,\\
            {\gamma}_{2}(\theta) = b_1 + a_0\sin\theta.\\
        \end{array}
    \end{cases}
$$
Hence $K$ is a disk centering at $(a_1, b_1)$ with radius $|a_0|$.
\end{proof}

The length $L$ of $\boldsymbol{\gamma} (\theta)$ and the area $A$ enclosed by $\gamma$ can be calculated, respectively, by 
\begin{equation}\label{2.3}
L=\oint_{\gamma}\mathrm{d}s
=\int_0^{2\pi}\rho(\theta)\mathrm{d}\theta
=\int_0^{2\pi}\left(p_K(\theta)+p''_K(\theta)\right)\mathrm{d}\theta
\end{equation} 
and 
\begin{equation}\label{2.4}
A=\frac{1}{2}\oint_{\gamma}\gamma_1\mathrm{d}\gamma_2 - \gamma_2\mathrm{d}\gamma_1 = \frac12 \int_0^{2\pi}\left(p_K(\theta)^2-p'_K(\theta)^2\right)\mathrm{d}\theta.
\end{equation}
Let $\boldsymbol{N}(\theta) = (-\cos\theta, -\sin\theta)$ be the unit inward normal vector field along $\gamma$. The curvature centers locus $\beta$ is given by 
$$
\boldsymbol{\beta}(\theta)
=\boldsymbol{\gamma}(\theta) + \rho(\theta)\boldsymbol{N}(\theta)
=({\beta}_{1}(\theta),{\beta}_{2}(\theta)),
$$
where
$$
    \begin{cases}
        \begin{array}{cl}
            {\beta}_{1}(\theta) 
            = -p'_K(\theta)\sin\theta-p''_K(\theta)\cos\theta,\\
            {\beta}_{2}(\theta) 
            = p'_K(\theta)\cos\theta-p''_K(\theta)\sin\theta.\\
        \end{array}
    \end{cases}
$$
The oriented area of the domain enclosed by $\beta$ is given by 
\begin{equation}\label{2.5}
\tilde A=\frac{1}{2}\oint_{\beta}\beta_1\mathrm{d}\beta_2 - \beta_2\mathrm{d}\beta_1=\frac12 \int_0^{2\pi}\left(p'_K(\theta)^2-p''_K(\theta)^2\right)\mathrm{d}\theta.
\end{equation}
Its curvature $\tilde \kappa(\theta)$ and the curvature radius $\rho_{\beta} (\theta)$ are given by 
$$
\tilde \kappa(\theta)
 = \frac{{\beta}_{1}'(\theta){\beta}_{2}''(\theta) - {\beta}_{2}'(\theta){\beta}_{1}''(\theta)}
{({\beta}_{1}'(\theta)^2 + {\beta}_{2}'(\theta)^2)^{3/2}}
 = \frac{1}{p'_K(\theta)+p'''_K(\theta)}
$$
and 
\begin{equation}\label{2.6}
\rho_{\beta} (\theta)
= \left| \frac{1}{\tilde \kappa(\theta)} \right|
= \left| p'_K(\theta)+p'''_K(\theta) \right|.
\end{equation}

From \eqref{2.1}--\eqref{2.6} we obtain the following expressions in terms of the Fourier coefficients of $p_K(\theta)$.
\begin{proposition}\label{P1}
The geometric quantities appearing in \eqref{1.10} satisfy 
\begin{align*}
\int_0^{2\pi}\rho(\theta)^2\,\mathrm{d}\theta
    &= 2\pi a_0^2 + \pi\sum_{n\geq 2} \left(n^2-1\right)^2 \left(a_n^2 + b_n^2\right), \\
L^2 &= 4\pi^2 a_0^2, \\
A   &= \pi a_0^2 - \frac{\pi}{2}\sum_{n\geq 2} \left(n^2-1\right)\left(a_n^2 + b_n^2\right), \\
\left|\tilde A\right| &= \frac{\pi}{2}\sum_{n\geq 2} n^2\left(n^2-1\right)\left(a_n^2 + b_n^2\right), \\
\int_0^{2\pi}\rho_{\beta}(\theta)^2\,\mathrm{d}\theta
    &= \pi \sum_{n\geq 2} n^2\left(n^2-1\right)^2 \left(a_n^2 + b_n^2\right).
\end{align*}
\end{proposition}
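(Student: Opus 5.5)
The plan is to substitute the Fourier expansion \eqref{2.2} of $p_K$ into the integral formulas \eqref{2.1}--\eqref{2.6} and apply Parseval's identity term by term. Strict convexity and smoothness give uniform convergence of the series for $p_K$ and its first three derivatives, which justifies the termwise differentiation and integration. Differentiating \eqref{2.2} twice gives
\[
\rho(\theta)=p_K+p_K''=a_0+\sum_{n\geq 1}(1-n^2)(a_n\cos n\theta+b_n\sin n\theta).
\]
The $n=1$ term drops out, and Parseval (constant term weighted by $2\pi$, the others by $\pi$) yields the formula for $\int_0^{2\pi}\rho^2\,\mathrm{d}\theta$ immediately. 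Integrating $\rho$ over $[0,2\pi]$ kills every oscillating term, so \eqref{2.3} gives $L=2\pi a_0$ and hence $L^2=4\pi^2a_0^2$.

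For $A$ we use \eqref{2.4}. Parseval gives
\[
\int_0^{2\pi}p_K^2\,\mathrm{d}\theta=2\pi a_0^2+\pi\sum_{n\ge1}(a_n^2+b_n^2),\qquad
\int_0^{2\pi}p_K'^2\,\mathrm{d}\theta=\pi\sum_{n\ge1} n^2(a_n^2+b_n^2).
\]
Taking half the difference leaves $\pi a_0^2-\frac{\pi}{2}\sum_{n\ge2}(n^2-1)(a_n^2+b_n^2)$, the $n=1$ terms cancelling. The same computation applied to \eqref{2.5}, with $\int p_K''^2=\pi\sum n^4(a_n^2+b_n^2)$, gives
\[
\tilde A=-\frac{\pi}{2}\sum_{n\ge2}n^2(n^2-1)(a_n^2+b_n^2).
\]
Every term in this sum is nonnegative, so $\tilde A\le 0$, and taking the absolute value gives the stated formula for $|\tilde A|$.

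For $\rho_\beta$ we use \eqref{2.6}. Since $\rho_\beta^2=(p_K'+p_K''')^2$, the absolute value plays no role in the integral. Termwise, $p_K'+p_K'''$ has coefficients $(n-n^3)$ multiplying $(-a_n\sin n\theta+b_n\cos n\theta)$, so Parseval gives $\pi\sum_{n\ge2}n^2(n^2-1)^2(a_n^2+b_n^2)$. This is exactly the computation already carried out in the proof of Proposition \ref{P2.1}.

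The only step needing any care is the sign of $\tilde A$, which must be determined before the absolute value can be removed; everything else is routine bookkeeping with Parseval. The regularity needed for uniform convergence of the differentiated series is assumed, as the paper does for smooth strictly convex curves.
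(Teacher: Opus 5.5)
Your proposal is correct and is essentially the paper's own argument: the paper gives no separate proof and just says the formulas follow from \eqref{2.1}--\eqref{2.6}, which amounts to substituting the Fourier series \eqref{2.2} and applying Parseval term by term, as you do. Your remark that $\tilde A=-\frac{\pi}{2}\sum_{n\ge 2}n^2(n^2-1)(a_n^2+b_n^2)\le 0$, which is needed to drop the absolute value, makes explicit a step the paper leaves unstated.
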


We use Lemma \ref{L1} to estimate $\oint_{\gamma}\kappa^2\mathrm{d}s$.
\begin{lemma}\label{L1}
For the curvature $\kappa$ of $\gamma$, 
\begin{equation}\label{2.7}
    \oint_{\gamma}\kappa^2\mathrm{d}s \geq 3L - 4A - 4\left|\tilde A\right|,
\end{equation}
with equality only if $K$ is a disk.
\end{lemma}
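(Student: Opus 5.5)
The plan is to convert everything into integrals over the normal angle $\theta$ and then use a pointwise inequality in the single variable $\rho(\theta)$. Since $\mathrm{d}s=\rho(\theta)\,\mathrm{d}\theta$ and $\kappa=1/\rho$, we have
$$
\oint_\gamma \kappa^2\,\mathrm{d}s=\int_0^{2\pi}\frac{1}{\rho(\theta)}\,\mathrm{d}\theta .
$$
Strict convexity gives $\rho>0$, so this integral is finite.

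The key observation concerns the combination $A+|\tilde A|$ on the right-hand side. By Proposition \ref{P1},
$$
A+\left|\tilde A\right|=\pi a_0^2-\frac{\pi}{2}\sum_{n\ge2}(n^2-1)(a_n^2+b_n^2)+\frac{\pi}{2}\sum_{n\ge2}n^2(n^2-1)(a_n^2+b_n^2).
$$
The two sums combine to $\frac{\pi}{2}\sum_{n\ge2}(n^2-1)^2(a_n^2+b_n^2)$, and comparing with the first formula of Proposition \ref{P1} gives
$$
A+\left|\tilde A\right|=\frac12\int_0^{2\pi}\rho(\theta)^2\,\mathrm{d}\theta .
$$
Here I use that $\tilde A=\frac{\pi}{2}\sum_{n\ge2}(n^2-n^4)(a_n^2+b_n^2)\le0$ by \eqref{2.5}, which is why the absolute value appears in Proposition \ref{P1}. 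Together with $L=\int_0^{2\pi}\rho\,\mathrm{d}\theta$ from \eqref{2.3}, inequality \eqref{2.7} becomes
$$
\int_0^{2\pi}\Bigl(\frac1\rho-3\rho+2\rho^2\Bigr)\mathrm{d}\theta\ \ge 0 .
$$

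This follows pointwise from the elementary factorization, valid for $t>0$,
$$
\frac1t-3t+2t^2=\frac{1-3t^2+2t^3}{t}=\frac{(t-1)^2(2t+1)}{t}\ \ge 0 .
$$
Applying it with $t=\rho(\theta)$ and integrating over $[0,2\pi]$ proves \eqref{2.7}.

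For the equality case: since $\rho$ is continuous and positive, equality forces $(\rho-1)^2(2\rho+1)/\rho\equiv0$, hence $\rho\equiv1$. Then $\rho'=p_K'+p_K'''\equiv0$, and the argument of Proposition \ref{P2.1} shows that $K$ is a disk (of radius $1$).

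The main obstacle was finding the right combination. At first glance $3L-4A-4|\tilde A|$ mixes quantities of different homogeneity, so one should not expect a scaling argument. The step that makes everything work is recognizing that $A+|\tilde A|=\frac12\int_0^{2\pi}\rho^2\,\mathrm{d}\theta$. After that, one only has to find the cubic polynomial with a double root at $t=1$, which I would locate by matching coefficients.
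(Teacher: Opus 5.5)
Your proof is correct, but it is organized differently from the paper's. The paper works only with integrated quantities. Cauchy--Schwarz gives $L\oint_\gamma\kappa^2\,\mathrm{d}s\ge 4\pi^2$, and the Pan--Zhang inequality \eqref{1.5} gives $A+\left|\tilde A\right|\ge L^2/(4\pi)$. Three-term AM--GM applied to $2(A+\left|\tilde A\right|)$, $2(A+\left|\tilde A\right|)$ and $\oint_\gamma\kappa^2\,\mathrm{d}s$ then assembles these into $3L$, and the equality case is inherited from that of \eqref{1.5}. You instead notice that $A+\left|\tilde A\right|=\frac12\int_0^{2\pi}\rho^2\,\mathrm{d}\theta$ is an exact identity, so \eqref{1.5} is really just Cauchy--Schwarz for $\int_0^{2\pi}\rho\,\mathrm{d}\theta$. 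Your factorization $\frac1t+2t^2-3t=(t-1)^2(2t+1)/t$ is the same AM--GM, $\frac1t+t^2+t^2\ge 3t$, applied at each $\theta$ before integrating rather than to the integrals. This gives you three things. First, a self-contained proof that does not appeal to \eqref{1.5} or its equality case. Second, an exact formula for the deficit,
$$\oint_\gamma\kappa^2\,\mathrm{d}s-3L+4A+4\left|\tilde A\right|=\int_0^{2\pi}\frac{(\rho(\theta)-1)^2\,(2\rho(\theta)+1)}{\rho(\theta)}\,\mathrm{d}\theta .$$
Third, the sharp equality statement: equality holds if and only if $\rho\equiv 1$, i.e.\ $K$ is a disk of radius $1$. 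The paper records only ``equality only if $K$ is a disk''; its AM--GM step would also force the radius to be $1$, but this is not noted. In exchange, the paper's route is more modular, since it chains known inequalities between global quantities rather than relying on an exact Fourier identity.
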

\begin{proof}[proof.]
By Cauchy-Schwarz inequality, 
$$
4\pi^2 
\leq 
\int_0^{2\pi}\frac{\mathrm{d}\theta}{\kappa} \int_0^{2\pi}\kappa\mathrm{d}\theta
= \oint_{\gamma}\mathrm{d}s \oint_{\gamma}\kappa^2\mathrm{d}s
= L\oint_{\gamma}\kappa^2\mathrm{d}s.
$$
Using the reverse isoperimetric inequality \eqref{1.5}, we have
$$
A + \left|\tilde A\right|
\geq \frac{L^2}{4\pi}.
$$
Hence 
\begin{align*}
4A + 4\left|\tilde A\right| + \oint_{\gamma}\kappa^2\mathrm{d}s 
& = 2\left(A + \left|\tilde A\right|\right) 
+ 2\left(A + \left|\tilde A\right|\right)
+ \oint_{\gamma}\kappa^2\mathrm{d}s\\
& \geq 3 \cdot \sqrt[3]
{4\left(A + \left|\tilde A\right|\right)^2 
\cdot 
\oint_{\gamma}\kappa^2\mathrm{d}s} \\
& \geq 3 \cdot \sqrt[3]
{4 \cdot \left(\frac{L^2}{4\pi}\right)^2 \cdot \frac{4\pi^2}{L}}\\
& \geq 3 L.
\end{align*}
By the equality condition of \eqref{1.5}, the equality holds in \eqref{2.7} only if $K$ is a disk.
\end{proof}

\section{Proof and application of Theorem \ref{T1}}\label{S3}

\begin{proof}[Proof of Theorem \ref{T1}.]

\begin{enumerate}[(i)]
    \item 
    It follows from inequality \eqref{1.3} and \eqref{1.4} that when $\lambda \leq 0$ and $\xi \geq 0$, we have
$$
\lambda\left(\rho_e - \rho_i\right)^2 \geq \frac{\lambda }{\pi^2}L^2 - \frac{4\lambda}{\pi}A.
$$
and
$$
    \xi\left(\rho_M - \rho_m\right)^2 \geq \frac{\xi }{\pi^2}L^2 - \frac{4\xi}{\pi}A
$$
Together with Lemma \ref{L1} and $\zeta \geq 0$, we have
$$
\zeta\oint_{\gamma}\kappa^2\mathrm{d}s 
\geq \zeta\left(3L - 4A - 4\left|\tilde A\right|\right).
$$
Hence
\begin{align*}
W
& \geq \alpha \int_0^{2\pi}\rho(\theta)^2\mathrm{d}\theta
+ \left(\delta + \frac{\lambda + \xi}{\pi^2}\right) L^2 
+ 3\zeta L 
\\
& \quad + \left(\mu - \frac{4}{\pi}(\lambda + \xi) - 4\zeta\right) A  + \left(\sigma - 4\zeta\right) \left|\tilde A\right| + \eta \int_0^{2\pi} \rho_{\beta}(\theta)^2\mathrm{d}\theta.
\end{align*}
Then by inequality \eqref{1.9} and Proposition \ref{P1}, we have 
\begin{equation}\label{3.1}
    \begin{split}
W
& \geq \alpha\left(2\pi a_0^2 + \pi\sum_{n\geq 2}\left(n^2-1\right)^2\left(a_n^2 + b_n^2\right)\right)\\
&\quad + \left(\delta + \frac{\lambda + \xi}{\pi^2}\right)4\pi^2a_0^2 + 6\pi\zeta a_0\\
&\quad + \left(\mu - \frac{4}{\pi}(\lambda + \xi) - 4\zeta\right)\left( \pi a_0^2-\frac{\pi}{2}\sum_{n\geq 2}\left(n^2-1\right)\left(a_n^2+b_n^2\right) \right)\\
&\quad + \frac{\pi}{2}(\sigma - 4\zeta)\sum_{n\geq 2}n^2\left(n^2-1\right)\left(a_n^2+b_n^2\right)\\
& \quad + \pi \eta \sum_{n\geq 2}n^2\left(n^2-1\right)^2\left(a_n^2+b_n^2\right)\\
& = 6\pi\zeta a_0 
+ \pi\left(2\alpha + 4\pi\delta + \mu - 4\zeta\right)a_0^2\\ 
& \quad + \frac\pi2
\sum_{n\geq 2}\left(n^2-1\right)
\cdot
f(n)
\cdot
\left(a_n^2 + b_n^2\right)\\
& \geq \frac\pi2
\sum_{n\geq 2}\left(n^2-1\right)
\cdot
f(n)
\cdot
\left(a_n^2 + b_n^2\right),
\end{split}
\end{equation}

where 
$$
f(n)=2\eta n^4 
    + \left(
        2\alpha + \sigma - 2\eta - 4\zeta 
    \right)n^2 
    + \left(
        - 2\alpha - \mu + \frac{4}{\pi}\lambda + \frac{4}{\pi}\xi + 4\zeta
    \right). 
$$
It follows from \eqref{1.9} that the polynomial $f(n)$ is an increasing function for $n \geq 2$, hence
\begin{align*}
W & \geq \frac\pi2
\sum_{n\geq 2}\left(n^2-1\right)
\cdot
f(n)
\cdot
\left(a_n^2 + b_n^2\right)\\
& \geq \frac\pi2
\sum_{n\geq 2}3f(2)
\cdot
\left(a_n^2 + b_n^2\right) \\
& = \frac{3}{2}\sum_{n\geq 2}\left(6\pi\alpha - \pi\mu + 4\pi\sigma + 24\pi\eta + 4\lambda + 4\xi - 12\pi\zeta\right)\left(a_n^2 + b_n^2\right)\\
& \geq 0.
\end{align*}
Hence $W \geq 0$,
which proves inequality \eqref{1.10}.
    \item
    If $K$ is a disk, then $\rho_M = \rho_m = \rho_e = \rho_i$, $\left|\tilde A\right|=0$ and $\rho_{\beta}=0$. According to the equality conditions in \eqref{1.1} and \eqref{1.6}, we have
$$L^2 = 4\pi A$$
and
$$\int_0^{2\pi}\rho(\theta)^2\mathrm{d}\theta=\frac{L^2-2\pi A}{\pi}=2A.$$
Hence
\begin{align*}
W
&=(2\alpha + 4\pi \delta + \mu )A + \zeta\oint_{\gamma}\kappa^2\mathrm{d}s.
\end{align*}
Then for the parameter set satisfying \eqref{1.11}
we have
$$W = 0.$$
    \item
    Conversely, if equality holds in \eqref{1.10}, the first inequality in \eqref{3.1} changes to equality, implying that
$$
\lambda\left(\rho_e - \rho_i\right)^2
= \frac{\lambda}{\pi^2}L^2 - \frac{4\lambda}{\pi}A,
$$
$$
\xi\left(\rho_M - \rho_m\right)^2 
= \frac{\xi }{\pi^2}L^2 - \frac{4\xi}{\pi}A,
$$
and
$$
\zeta\oint_{\gamma}\kappa^2\mathrm{d}s 
= \zeta\left(3L - 4A - 4\left|\tilde A\right|\right).
$$
Combining \eqref{1.12} yields
$$
\left(\rho_e - \rho_i\right)^2 
= \frac{L^2 - 4\pi A}{\pi^2},
$$
$$
\left(\rho_M - \rho_m\right)^2 
= \frac{L^2 - 4\pi A}{\pi^2},
$$
or
$$
\oint_{\gamma}\kappa^2\mathrm{d}s 
= 3L - 4A - 4\left|\tilde A\right|.
$$
By the equality conditions of \eqref{1.3}, \eqref{1.4} and \eqref{2.7}, $K$ is a disk. 
This proves Theorem \ref{T1}.
\end{enumerate}
\end{proof}

\begin{remark}
    If the equality holds in \eqref{1.11} and the parameter set satisfies
    $$
    \begin{cases}
        \begin{array}{cl}
        \eta, \xi, \zeta \geq 0,\\
        \lambda \leq 0,\\
        2\alpha + 4\pi\delta + \mu - 4\zeta \geq 0,\\
        2\alpha + \sigma - 2\eta - 4\zeta \geq 0,\\
        6\pi\alpha - \pi\mu + 4\pi\sigma + 24\pi\eta + 4\lambda + 4\xi - 12\pi\zeta > 0,\\
        \end{array}
    \end{cases}
    $$
then $a_n=b_n=0$ for all $n \geq 2$. Hence the support function is of the form
$p_K(\theta)
= a_0 
+ a_1 \cos\theta + b_1 \sin\theta.$
By Proposition \ref{P2.1}, $K$ is a disk centering at $(a_1, b_1)$ with radius $|a_0|$.
\end{remark}

\begin{remark}
    If the equality holds in \eqref{1.11} and the parameter set satisfies
    $$
    \begin{cases}
        \begin{array}{cl}
        \eta > 0,\\
        \xi, \zeta \geq 0,\\
        \lambda \leq 0,\\
        2\alpha + 4\pi\delta + \mu - 4\zeta \geq 0,\\
        2\alpha + \sigma - 2\eta - 4\zeta \geq 0,\\
        6\pi\alpha - \pi\mu + 4\pi\sigma + 24\pi\eta + 4\lambda + 4\xi - 12\pi\zeta = 0,\\
        \end{array}
    \end{cases}
    $$
then $a_n=b_n=0$ for all $n > 2$.
Hence the support function is of the form
$p_K(\theta)
= a_0 
+ a_1 \cos\theta + b_1 \sin\theta
+ a_2 \cos2\theta + b_2 \sin2\theta.$ 
\end{remark}

\begin{remark}
The parametric isoperimetric-type inequality \eqref{1.10} is an improved version of proven results.
\begin{itemize}
    \item When 
    $(\alpha, \delta, \mu, \sigma, \eta, \lambda, \xi, \zeta)
    = (0, 1, -4\pi, 0, 0, 0, 0, 0)$, the inequality \eqref{1.10} turns into \eqref{1.1}.
    \item When  
    $(\alpha, \delta, \mu, \sigma, \eta, \lambda, \xi, \zeta)
    = (0, -1, 4\pi, 4\pi, 0, 0, 0, 0)$, \eqref{1.10} turns into \eqref{1.5}.
    \item When 
    $(\alpha, \delta, \mu, \sigma, \eta, \lambda, \xi, \zeta)
    = (\pi, -1, 2\pi, 0, 0, 0, 0, 0)$, \eqref{1.10} turns into \eqref{1.6}.
    \item If we select 
    $(\alpha, \delta, \mu, \sigma, \eta, \lambda, \xi, \zeta)
    = (0, -1, 4\pi, \pi, 0, 0, 0, 0)$, \eqref{1.10} turns into \eqref{1.7}.
    \item If we select 
    $(\alpha, \delta, \mu, \sigma, \eta, \lambda, \xi, \zeta)
    = (\pi, -1, 2\pi, -\pi, 0, 0, 0, 0)$, \eqref{1.10} turns into \eqref{1.8}.
\end{itemize}

Hence \eqref{1.10} could also be regarded as a family of reverse isoperimetric-type inequality. 
\end{remark}

\begin{corollary}\label{C1}
    If the parameter set satisfies \eqref{1.9}, \eqref{1.11} and \eqref{1.12}, i.e.
    \begin{equation}\label{3.2}
            \begin{cases}
                \begin{array}{cl}
            \eta, \xi \geq 0,\\
            \lambda \leq 0,\\
            \lambda^2 + \xi^2 > 0,\\
            \zeta = 0,\\
            2\alpha + 4\pi\delta + \mu = 0,\\
            2\alpha + \sigma - 2\eta \geq 0,\\
            6\pi\alpha - \pi\mu + 4\pi\sigma + 24\pi\eta + 4\lambda + 4\xi \geq   0,\\
                \end{array}
            \end{cases}
        \end{equation} then $K$ is a disk if and only if the equality holds in \eqref{1.10}. 
\end{corollary}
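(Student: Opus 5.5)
The corollary follows by combining parts (ii) and (iii) of Theorem \ref{T1}. First I would check that the system \eqref{3.2} is exactly \eqref{1.9}, \eqref{1.11} and \eqref{1.12} taken together. Substituting $\zeta=0$ from \eqref{1.11} into \eqref{1.9} turns the constraint $2\alpha+4\pi\delta+\mu-4\zeta\geq 0$ into $2\alpha+4\pi\delta+\mu\geq 0$. This is implied by the equality $2\alpha+4\pi\delta+\mu=0$. The constraint $2\alpha+\sigma-2\eta-4\zeta\geq 0$ becomes $2\alpha+\sigma-2\eta\geq 0$. The last linear constraint loses its $-12\pi\zeta$ term. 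The condition $\zeta\geq 0$ holds trivially. Finally, \eqref{1.12} with $\zeta=0$ reads $\lambda^2+\xi^2>0$. So \eqref{3.2} is equivalent to the conjunction of the three conditions.

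For the ``if'' direction, suppose $K$ is a disk. Since the parameters satisfy \eqref{1.11}, Theorem \ref{T1}(ii) gives equality in \eqref{1.10}. In that proof, $W=(2\alpha+4\pi\delta+\mu)A+\zeta\oint_\gamma\kappa^2\mathrm{d}s$ for a disk, and both terms vanish under \eqref{1.11}.

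For the ``only if'' direction, suppose equality holds in \eqref{1.10}. The parameters satisfy \eqref{1.9} and \eqref{1.12}, so Theorem \ref{T1}(iii) applies directly and $K$ is a disk.

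There is no real obstacle here. The only point needing care is the bookkeeping that \eqref{3.2} reproduces all of \eqref{1.9} once $\zeta=0$ is imposed. In particular, $\zeta\geq0$ must be seen to hold trivially, and the inequality $2\alpha+4\pi\delta+\mu-4\zeta\geq0$ must be seen to follow from the equality in \eqref{1.11}.
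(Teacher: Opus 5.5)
Your proposal is correct and matches the paper. The paper gives no separate proof of Corollary~\ref{C1}; it treats the corollary as the direct combination of Theorem~\ref{T1}(ii) for the ``if'' direction and Theorem~\ref{T1}(iii) for the ``only if'' direction, with \eqref{3.2} being \eqref{1.9}, \eqref{1.11} and \eqref{1.12} taken together once $\zeta=0$, and your check of that reduction (including that $\zeta\geq 0$ and $2\alpha+4\pi\delta+\mu-4\zeta\geq 0$ become automatic) is accurate.
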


If we select other values of parameter set to satisfy \eqref{1.9}, we can derive some new inequalities: 
\begin{corollary}\label{C}
Let $K \subset \mathbb{R}^2$ be a closed and strictly convex domain, with geometric quantities $\rho$, $L$, $A$, $\tilde A$, $\rho_{\beta}$, $\rho_e$, $\rho_i$, $\rho_M$, $\rho_m$ and $\kappa$ defined as above. 
\begin{enumerate}[(i)]
    \item We have the following inequalities
\begin{equation}\label{3.3}
2L^2
+ 8\left|\tilde A\right|
+ 2\int_0^{2\pi}\rho_{\beta}(\theta)^2\mathrm{d}\theta
+ \oint_{\gamma}\kappa^2\mathrm{d}s
\geq
20A, 
\end{equation}
\begin{equation}\label{3.4}
\begin{split}
3\int_0^{2\pi}\rho(\theta)^2\mathrm{d}\theta
+ \left(32\pi - 38\right)A
+ (\pi - 1)\int_0^{2\pi}\rho_{\beta}(\theta)^2\mathrm{d}\theta\\
\geq
\frac{8\pi - 8}{\pi}L^2
+ ( 8 - 2\pi )\left|\tilde A\right|,
\end{split}
\end{equation}

\begin{equation}\label{3.5}
\int_0^{2\pi}\rho(\theta)^2\mathrm{d}\theta
+ \left(\frac{52\pi}{7} - 2\right)A
+ \frac{3}{4}\int_0^{2\pi}\rho_{\beta}(\theta)^2\mathrm{d}\theta
\geq
\frac{13}{7}L^2
+ \frac{\pi}{7}\left|\tilde A\right|
\end{equation}
and
\begin{equation}\label{3.6}
\left|\tilde A\right|
+ \epsilon\int_0^{2\pi}\rho_{\beta}(\theta)^2\mathrm{d}\theta
+ \pi \left(\rho_M - \rho_m\right)^2
\geq
2\pi \left(\rho_e - \rho_i\right)^2,
\end{equation}
where $\epsilon \in [0, \frac{1}{2}]$.

\item The inequality in \eqref{3.3} is strict, i.e.
$$
2L^2
+ 8\left|\tilde A\right|
+ 2\int_0^{2\pi}\rho_{\beta}(\theta)^2\mathrm{d}\theta
+ \oint_{\gamma}\kappa^2\mathrm{d}s
>
20A. 
$$

\item The equalities hold in \eqref{3.4} and \eqref{3.5} if $K$ is a disk.

\item The equality holds in \eqref{3.6} if and only if $K$ is a disk.
\end{enumerate}
\end{corollary}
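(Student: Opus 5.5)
Each of \eqref{3.3}--\eqref{3.6} will be obtained as a special case of Theorem \ref{T1}. For each one I move every term to the left-hand side so that it reads $W\geq 0$ (or $W>0$). Then I read off the parameter set $(\alpha,\delta,\mu,\sigma,\eta,\lambda,\xi,\zeta)$ and check the constraints \eqref{1.9}. Equality cases come from part (ii) of Theorem \ref{T1}, or from Corollary \ref{C1} when \eqref{1.12} also holds. Everything is bookkeeping on the five linear expressions in \eqref{1.9}.

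For \eqref{3.3} I take $(\alpha,\delta,\mu,\sigma,\eta,\lambda,\xi,\zeta)=(0,2,-20,8,2,0,0,1)$. The checks give:
\begin{itemize}
\item $2\alpha+4\pi\delta+\mu-4\zeta=8\pi-24>0$;
\item $2\alpha+\sigma-2\eta-4\zeta=0$;
\item the last expression equals $20\pi+32\pi+48\pi-12\pi=88\pi>0$.
\end{itemize}
So \eqref{1.9} holds and \eqref{3.3} follows. For strictness (part (ii)), note $\zeta=1$, so \eqref{1.12} holds. By Theorem \ref{T1}(iii), equality would force $K$ to be a disk. For a disk, however, the computation in the proof of Theorem \ref{T1}(ii) gives
$$W=(2\alpha+4\pi\delta+\mu)A+\zeta\oint_\gamma\kappa^2\mathrm{d}s=(8\pi-20)A+\oint_\gamma\kappa^2\mathrm{d}s>0,$$
which is a contradiction. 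Hence the inequality is strict.

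For \eqref{3.4} I take $\alpha=3$, $\delta=-\frac{8\pi-8}{\pi}$, $\mu=32\pi-38$, $\sigma=2\pi-8$, $\eta=\pi-1$ and $\lambda=\xi=\zeta=0$. The checks give:
\begin{itemize}
\item $2\alpha+4\pi\delta+\mu=6-32\pi+32+32\pi-38=0$;
\item $2\alpha+\sigma-2\eta=0$;
\item in the last expression the $\pi^2$ terms $-32\pi^2+8\pi^2+24\pi^2$ cancel and the $\pi$ terms $18\pi+38\pi-32\pi-24\pi$ cancel, so it equals $0$.
\end{itemize}
So \eqref{1.9} holds, with equality in every constraint, and \eqref{1.11} holds as well. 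Thus Theorem \ref{T1}(ii) gives equality for disks.

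For \eqref{3.5} I take $\alpha=1$, $\delta=-\frac{13}{7}$, $\mu=\frac{52\pi}{7}-2$, $\sigma=-\frac{\pi}{7}$, $\eta=\frac34$ and $\lambda=\xi=\zeta=0$. The checks give:
\begin{itemize}
\item $2\alpha+4\pi\delta+\mu=0$;
\item $2\alpha+\sigma-2\eta=\frac12-\frac{\pi}{7}>0$;
\item the last expression equals $26\pi-8\pi^2=2\pi(13-4\pi)>0$, since $4\pi<13$.
\end{itemize}
Again \eqref{1.9} and \eqref{1.11} hold, so \eqref{3.5} follows with equality for disks.

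For \eqref{3.6} I take $\sigma=1$, $\eta=\epsilon$, $\xi=\pi$, $\lambda=-2\pi$ and all other parameters $0$. The checks give:
\begin{itemize}
\item $\eta,\xi\geq0$ and $\lambda\leq0$;
\item $2\alpha+4\pi\delta+\mu=0$ and $\zeta=0$;
\item $\sigma-2\eta=1-2\epsilon\geq0$, which is exactly where $\epsilon\leq\frac12$ is needed;
\item the last expression equals $4\pi-8\pi+4\pi+24\pi\epsilon=24\pi\epsilon\geq0$.
\end{itemize}
Since $\lambda^2+\xi^2>0$, the full system \eqref{3.2} holds. Corollary \ref{C1} then gives that equality holds if and only if $K$ is a disk.

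The only delicate points are the sign check $26\pi-8\pi^2>0$ and the strictness argument in part (ii). The latter needs the disk evaluation of $W$ rather than just \eqref{1.9}; the rest is routine arithmetic.
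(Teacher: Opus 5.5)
Your proposal is correct and follows the paper's proof: it uses the same four parameter sets \eqref{3.7}--\eqref{3.10}, checks them against \eqref{1.9}, and gets the equality and strictness statements from Theorem~\ref{T1}(ii), (iii) and Corollary~\ref{C1}. Your strictness argument for \eqref{3.3} evaluates $W=(8\pi-20)A+\oint_\gamma\kappa^2\,\mathrm{d}s>0$ directly on a disk, which makes explicit what the paper only implies when it says that \eqref{3.7} would have to satisfy \eqref{1.11}.
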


\begin{proof}[proof.]
\begin{enumerate}[(i)]
    \item The parameter set 
    \begin{equation}\label{3.7}
    (\alpha, \delta, \mu, \sigma, \eta, \lambda, \xi, \zeta) 
    = (0, 2, -20, 8, 2, 0, 0, 1)        
    \end{equation}
    satisfies conditions \eqref{1.9}, then it follows from Theorem \ref{T1} (\ref{1.1.1}) that \eqref{3.3} is proved. Also, we can select 
    \begin{equation}\label{3.8}
    (\alpha, \delta, \mu, \sigma, \eta, \lambda, \xi, \zeta) 
    = \left(3, \frac{8-8\pi}{\pi}, 32\pi-38, 2\pi-8, \pi-1, 0, 0, 0\right),
    \end{equation}
    \begin{equation}\label{3.9}
    (\alpha, \delta, \mu, \sigma, \eta, \lambda, \xi, \zeta) 
    = \left(1, -\frac{13}{7}, \frac{52}{7}\pi-2, -\frac{\pi}{7}, \frac{3}{4}, 0, 0, 0\right)
    \end{equation} 
    and
    \begin{equation}\label{3.10}
    (\alpha, \delta, \mu, \sigma, \eta, \lambda, \xi, \zeta) 
    = (0, 0, 0, 1, \epsilon, -2\pi, \pi, 0),\ \ 
    \epsilon \in \left[0, \frac{1}{2}\right]
    \end{equation} 
    satisfying \eqref{1.9} to prove \eqref{3.4}-\eqref{3.6}.
    
    \item 
    Assume the equality holds in \eqref{3.3}. Parameter set \eqref{3.7} satisfies \eqref{1.12}, it follows from Theorem \ref{T1} (\ref{1.1.3}) that $K$ is a disk. Then by proof of Theorem \ref{T1} (\ref{1.1.2}), parameter set \eqref{3.7} satisfies \eqref{1.11}, which is contradiction. Hence the inequality in \eqref{3.3} is strict.
    
    \item 
    We can verify parameter sets \eqref{3.8} and \eqref{3.9} both satisfy \eqref{1.11}. It follows from Theorem \ref{T1} (\ref{1.1.2}) that the equalities hold in \eqref{3.4} and \eqref{3.5} if $K$ is a disk.
    
    \item 
    The parameter set \eqref{3.10} satisfies \eqref{3.2}. It follows form Corollary \ref{C1} that equality holds in \eqref{3.6} if and only if $K$ is a disk.
\end{enumerate}

\end{proof}

\begin{remark}
Inequalities \eqref{3.4} and \eqref{3.5} are improved versions of 
\begin{equation}\label{3.11}
    \int_0^{2\pi}\rho(\theta)^2\mathrm{d}\theta
    + 30A
    + \int_0^{2\pi}\rho_{\beta}(\theta)^2\mathrm{d}\theta
    \geq
    2L^2
\end{equation}
in \cite[Theorem 1.7]{LG}.    
Rewriting \eqref{3.4} gives
\begin{align*}
\int_0^{2\pi}\rho(\theta)^2\mathrm{d}\theta
+ (8\pi-2) A
+ \frac{\pi-1}{3}\int_0^{2\pi}\rho_{\beta}(\theta)^2\mathrm{d}\theta\\
\geq
\frac{8\pi-8}{3\pi}L^2
+ \frac{8-2\pi}{3\pi}
\left( 4\pi A + \pi\left|\tilde A\right| \right).
\end{align*}   
It follows from \eqref{1.7} that 
$$\frac{8\pi-8}{3\pi}L^2
+ \frac{8-2\pi}{3\pi}
\left( 4\pi A + \pi\left|\tilde A\right| \right)
\geq
\frac{8\pi-8}{3\pi}L^2 + \frac{8-2\pi}{3\pi}L^2
= 2L^2.
$$
Together with 
$$\frac{\pi-1}{3}\int_0^{2\pi}\rho_{\beta}(\theta)^2\mathrm{d}\theta
\leq
\int_0^{2\pi}\rho_{\beta}(\theta)^2\mathrm{d}\theta,
$$
and
$$
(8\pi-2) A
\leq
30A,
$$
\eqref{3.4} implies \eqref{3.11}, and hence is an improved version of \eqref{3.11}. 
A similar argument applies to \eqref{3.5}.
\end{remark}

\section{Stability of isoperimetric-type inequalities}\label{S4}

\begin{definition}\label{D}
    Let $K, K' \subset \mathbb{R}^2$ be two convex domains with support functions $p_K$, $p_{K'}$, respectively. 
    \begin{enumerate}[(i)]
        \item The $h_1$ norm between $K$ and $K'$ is
        \begin{equation}\label{4.1}
        h_1(K, K')=\max_{\theta \in [0, 2\pi]}\left|p_K(\theta)-p_{K'}(\theta)\right|.
        \end{equation}
    
        \item The $h_2$ norm between $K$ and $K'$ is 
        \begin{equation}\label{4.2}
        h_2(K, K')=\left(\int_0^{2\pi}\left|p_K(\theta)-p_{K'}(\theta)\right|^2\mathrm{d}\theta\right)^{\frac{1}{2}}.
        \end{equation}
    \end{enumerate}
\end{definition}

\begin{proposition}
    Both norms satisfy $h_k(K, K')=0$ if and only if $K = K'$ for $k = 1, 2$.
\end{proposition}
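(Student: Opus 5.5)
The plan is to show that a convex domain is determined by its support function, and then to check that each norm vanishes exactly when the two support functions coincide. One direction is immediate. If $K = K'$, then $p_K(\theta)=p_{K'}(\theta)$ for every $\theta$, since the support function
$$p_K(\theta)=\max_{x\in K}\langle x,(\cos\theta,\sin\theta)\rangle$$
depends only on the set $K$. Both \eqref{4.1} and \eqref{4.2} are then zero.

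For the converse I treat the two norms separately.

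\emph{The $h_1$ norm.} If $h_1(K,K')=0$, the maximum of $|p_K-p_{K'}|$ over $[0,2\pi]$ is zero, so $p_K\equiv p_{K'}$ pointwise.

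\emph{The $h_2$ norm.} If $h_2(K,K')=0$, then $|p_K-p_{K'}|^2$ is a nonnegative function with zero integral. The key point is that support functions of compact convex sets are continuous; they are in fact Lipschitz, being a maximum of linear functions of $(\cos\theta,\sin\theta)$. Hence $|p_K-p_{K'}|^2$ is continuous, and a continuous nonnegative function with zero integral vanishes identically. If it were positive at some $\theta_0$, it would stay above a positive constant on a neighbourhood of $\theta_0$, which would make the integral positive. Thus again $p_K\equiv p_{K'}$.

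It remains to deduce $K=K'$ from $p_K\equiv p_{K'}$. I use the standard representation of a compact convex set as the intersection of its supporting half-planes:
$$K=\bigcap_{\theta\in[0,2\pi]}\{x\in\mathbb{R}^2:\ \langle x,(\cos\theta,\sin\theta)\rangle\le p_K(\theta)\}.$$
The inclusion $\subseteq$ is immediate from the definition of $p_K$. The inclusion $\supseteq$ follows from the separating hyperplane theorem: a point $x_0\notin K$ can be strictly separated from the closed convex set $K$ by a line with some unit normal $(\cos\theta,\sin\theta)$. This gives $\langle x_0,(\cos\theta,\sin\theta)\rangle>p_K(\theta)$, so $x_0$ lies outside the corresponding half-plane. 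Since the right-hand side of the representation depends only on the function $p_K$, equal support functions give $K=K'$.

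Alternatively, for the smooth strictly convex curves used throughout the paper, one can argue directly: the boundary is recovered from $p_K$ through the parametrization $\boldsymbol{\gamma}(\theta)$ in Section \ref{S2}, so equal support functions give the same boundary curve and hence the same domain.

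The only step needing care is the $h_2$ case, which rests on the continuity of support functions; apart from that the argument is routine.
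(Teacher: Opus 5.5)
Your proof is correct and complete. The paper states this proposition with no proof at all; it is treated as a standard fact right after Definition \ref{D}. So there is no argument in the paper to compare against, and what you give is the standard justification the paper omits.

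You isolate the two ingredients that are actually needed:
\begin{itemize}
\item Continuity (indeed Lipschitz continuity) of support functions. This is what turns $h_2(K,K')=0$, which is only an $L^2$ statement, into pointwise equality $p_K\equiv p_{K'}$.
\item The representation of a compact convex set as the intersection of its supporting half-planes, obtained from the separation theorem. This recovers $K$ from $p_K$.
\end{itemize}

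Your two routes for the last step differ in scope. The separation argument works for arbitrary compact convex sets, which is the generality in which Definition \ref{D} is phrased. The parametrization argument via $\boldsymbol{\gamma}(\theta)$ covers only the smooth strictly convex setting of Section \ref{S2}. In that setting $p_K$ is $C^2$, so the continuity point is automatic too. Either route suffices for how the proposition is used in the paper, since there $K$ is smooth and strictly convex and $K'=S(K)$ is a disk.
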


\begin{definition}\label{D1}
The Steiner point $\boldsymbol{C}(K)$ of convex domain $K$ is
\begin{equation}\label{4.3}
\boldsymbol{C}(K)
= \frac{1}{\pi}
\int_0^{2\pi}p_K(\theta)(\cos\theta, \sin\theta)\mathrm{d}\theta.
\end{equation}
The Steiner disk $S(K)$ of $K$ centers at $\boldsymbol{C}(K)$ with radius $\frac{L(K)}{2\pi}$, where $L(K)$ is the perimeter of $K$.
\end{definition}

The following proofs give the stability of inequality \eqref{1.10} with respect to the $h_1$ and $h_2$ norms.

\begin{proof}[Proof of Theorem \ref{T2}.]
\begin{enumerate}[(i)]
    \item 
    Assume $\boldsymbol{C}(K)=(0,0)$, then it follows from \eqref{2.2} and \eqref{4.3} that $a_1 = b_1 =0$. Since $L(K)=2\pi a_0$, the support functions $p_K$ and $p_{S(K)}$ have Fourier expansions
\begin{equation}\label{4.4}
p_K(\theta)=\frac{L(K)}{2\pi}+\sum_{n\geq 2}\left(a_n\cos n\theta+b_n\sin n\theta\right)
\end{equation}
and
\begin{equation}\label{4.5}
p_{S(K)}(\theta)=\frac{L(K)}{2\pi}.
\end{equation}
From the proof of Theorem \ref{T1}, we obtain
\begin{equation}\label{4.6}
6\pi\zeta a_0
+ \pi\left(2\alpha + 4\pi \delta + \mu - 4\zeta\right)a_0^2 
+ \frac\pi2
\sum_{n\geq 2}\left(n^2-1\right)
\cdot
f(n)
\cdot
\left(a_n^2 + b_n^2\right)
\leq 
W.
\end{equation} 
Using \eqref{4.4} and \eqref{4.5}, we have
\begin{align*}
\left|p_K(\theta)-p_{S(K)}(\theta)\right|
&=\left|\frac{L(K)}{2\pi}+\sum_{n\geq 2}\left(a_n\cos n\theta+b_n\sin n\theta\right)-\frac{L(K)}{2\pi}\right|\\
&\leq\sum_{n\geq 2}\left|a_n \cos n\theta+b_n\sin n\theta\right|\\
&\leq\sum_{n\geq 2}\sqrt{a_n^2+b_n^2}
\end{align*}
for all $\theta \in [0,2\pi]$. By \eqref{4.1} and Cauchy-Schwarz Inequality, 
\begin{equation}\label{4.7}
\begin{split}
& \quad h_1\left(K, S(K)\right)^2\\
&\leq \left(\sum_{n\geq 2}\sqrt{a_n^2+b_n^2}\right)^2\\
&\leq \left(\frac\pi2
\sum_{n\geq 2}
\left(n^2-1\right)
\cdot f(n) \cdot \left(a_n^2+b_n^2\right)\right)
 \left(\frac{2}{\pi}\sum_{n\geq 2}
\frac{1}
{\left(n^2-1\right)
\cdot f(n)  }\right)\\
&\leq 
\left(
6\pi\zeta a_0
+ \pi\left(2\alpha + 4\pi\delta + \mu - 4\zeta\right)a_0^2
\right)\cdot 1 \\
& \quad + \left(\frac\pi2
\sum_{n\geq 2}
\left(n^2-1\right)
\cdot f(n) \cdot \left(a_n^2+b_n^2\right)\right)
 \left(\frac{2}{\pi}\sum_{n\geq 2}
\frac{1}
{\left(n^2-1\right)
\cdot f(n)}\right).
\end{split}
\end{equation} 
For all $n \geq 2$, we have
$$
f(n)\geq f(2)
= \frac{1}{\pi} (6\pi\alpha - \pi\mu + 4\pi\sigma + 24\pi\eta + 4\lambda + 4\xi - 12\pi\zeta) > 0.
$$
Hence 
\begin{align*}
\quad \frac{2}{\pi}\sum_{n\geq 2}
\frac{1}
{\left(n^2-1\right)
\cdot f(n) } 
&\leq 
\frac{2}{\pi \cdot f(2)}
\sum_{n\geq 2}\frac{1}{n^2-1}\\
& = \frac{3}{2(6\pi\alpha - \pi\mu + 4\pi\sigma + 24\pi\eta + 4\lambda + 4\xi - 12\pi\zeta)}.
\end{align*}
Combining \eqref{4.6} and \eqref{4.7} yields
$$
h_1\left(K, S(K)\right)^2
\leq
C(\alpha, \mu, \sigma, \eta, \lambda, \xi, \zeta)W
$$

where
\[  C(\alpha, \mu, \sigma, \eta, \lambda, \xi, \zeta)=\max \left\{
1,
\frac{3}{2(6\pi\alpha - \pi\mu + 4\pi\sigma + 24\pi\eta + 4\lambda + 4\xi - 12\pi\zeta)}
\right\}. \]
This proves inequality \eqref{1.14}.
    \item
    If $K$ is a disk, the proof of Theorem \ref{T1} gives
$$
W = \left(2\alpha + 4\pi\delta + \mu\right)A 
+ \zeta \oint_{\gamma}\kappa^2\mathrm{d}s.
$$
If the parameter set satisfies \eqref{1.11},
we have $ W=0$. 
$K$ is a disk, implying that $h_1(K, S(K))=0$, hence the equality holds in \eqref{1.14}. 
This proves Theorem \ref{T2}.
\end{enumerate}
\end{proof}

\begin{proof}[Proof of Theorem \ref{T3}.]
\begin{enumerate}[(i)]
    \item 
    Assume $\boldsymbol{C}(K)=(0,0)$.
    By \eqref{4.2} and Parseval's equality,
\begin{align*}
& \quad h_2\left(K, S(K)\right)^2\\
& = \int_0^{2\pi}\left|p_K(\theta)-p_{S(K)}(\theta)\right|^2\mathrm{d}\theta\\
& = \pi\sum_{n\geq 2}\left(a_n^2+b_n^2\right)\\
& = C(\alpha, \mu, \sigma, \eta, \lambda, \xi, \zeta) 
\cdot 
\frac{\pi}{2}\sum_{n\geq 2} 
3f(2)
\cdot
\left(a_n^2+b_n^2\right).
\end{align*}
Using \eqref{1.13} and \eqref{3.1}, we have
\begin{align*}
h_2\left(K, S(K)\right)^2
& \leq C(\alpha, \mu, \sigma, \eta, \lambda, \xi, \zeta) 
\cdot 
\frac{\pi}{2}
\sum_{n\geq 2}
\left(n^2-1\right)
\cdot f(n) \cdot
\left(a_n^2+b_n^2\right)\\
& \leq C(\alpha, \mu, \sigma, \eta, \lambda, \xi, \zeta) W.
\end{align*}
This proves inequality \eqref{1.15}.

    \item
    If $K$ is a disk, the proof of Theorem \ref{T1} gives
$$
W = \left(2\alpha + 4\pi\delta + \mu\right)A 
+ \zeta \oint_{\gamma}\kappa^2\mathrm{d}s.
$$
If the parameter set satisfies \eqref{1.11},  
we have $ W=0$. $K$ is a disk, implying that $h_2(K, S(K))=0$, hence the equality holds in \eqref{1.15}. 

    \item
    Conversely, if equality holds in \eqref{1.15}, all inequalities in 
\begin{align*}
W 
& \geq \alpha \int_0^{2\pi}\rho(\theta)^2\mathrm{d}\theta
+ \left(\delta + \frac{\lambda + \xi}{\pi^2}\right) L^2 
+ 3\zeta L
+ \left(\mu - \frac{4}{\pi}(\lambda + \xi) - 4\zeta\right) A \\
& \quad + \left(\sigma - 4\zeta\right) \left|\tilde A\right| + \eta \int_0^{2\pi} \rho_{\beta}(\theta)^2\mathrm{d}\theta\\
& = 6\pi\zeta a_0 
+ \pi\left(2\alpha + 4\pi\delta + \mu - 4\zeta\right)a_0^2 + \frac\pi2
\sum_{n\geq 2}\left(n^2-1\right)
\cdot f(n) \cdot
\left(a_n^2 + b_n^2\right)\\
& \geq \frac{\pi}{2}
\sum_{n\geq 2}\left(n^2-1\right)
\cdot f(n) \cdot
\left(a_n^2 + b_n^2\right)\\
& \geq \frac\pi2 \sum_{n\geq 2}
3\left( 6\alpha - \mu + 4\sigma + 24\eta + \frac{4}{\pi}\lambda + \frac{4}{\pi}\xi - 12\zeta \right)\left(a_n^2 + b_n^2\right)\\
& = \frac{h_2\left(K, S(K)\right)^2}{C(\alpha, \mu, \sigma, \eta, \lambda, \xi, \zeta)} 
\end{align*}
become equalities.
As proof of Theorem \ref{T1}, equality holds in \eqref{1.3}, \eqref{1.4} and \eqref{2.7}, which implies that $K$ is a disk. 
This proves Theorem \ref{T3}.
\end{enumerate}
\end{proof}

\begin{definition}
    Inequality \eqref{1.10} is called $h_k$-stable, if there exists a positive constant $C$ depending only on parameter set, such that
    $$
    h_k(K,S(K))^2 \leq CW,\ \ k=1, 2.
    $$
    \eqref{1.10} is called stable, if it is both $h_1$-stable and $h_2$-stable;
    otherwise it is unstable.
\end{definition}

\begin{corollary}\label{CC}
    For parameter set satisfying \eqref{1.13}, inequality \eqref{1.10} is stable.
\end{corollary}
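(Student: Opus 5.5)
This corollary follows by unwinding the definition of stability and invoking Theorems \ref{T2} and \ref{T3}. By definition, \eqref{1.10} is stable when it is both $h_1$-stable and $h_2$-stable. So I need, for every parameter set satisfying \eqref{1.13}, a positive constant $C$ depending only on the parameters such that $h_k(K,S(K))^2\le CW$ for $k=1,2$ and every admissible $K$.

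\textbf{The $h_1$ case.} By Definition \ref{D}, $h_1(K,S(K))=\max_{\theta}|p_K(\theta)-p_{S(K)}(\theta)|$. Theorem \ref{T2}(i), under \eqref{1.13}, gives $h_1(K,S(K))^2\le C W$ with
$$C=\max\left\{1,\frac{3}{2(6\pi\alpha-\pi\mu+4\pi\sigma+24\pi\eta+4\lambda+4\xi-12\pi\zeta)}\right\}.$$
The last condition in \eqref{1.13} is strict, so the denominator is positive and the second entry is finite and positive. Hence $C$ is a finite positive number depending only on the parameters, not on $K$.

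\textbf{The $h_2$ case.} Similarly, $h_2(K,S(K))^2=\int_0^{2\pi}|p_K-p_{S(K)}|^2\,\mathrm{d}\theta$. Theorem \ref{T3}(i) gives the bound with constant $\frac{2\pi}{3(6\pi\alpha-\pi\mu+4\pi\sigma+24\pi\eta+4\lambda+4\xi-12\pi\zeta)}$. By the same strict inequality in \eqref{1.13}, this constant is finite and positive.

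The only point needing care is that both theorems are proved after translating so that the Steiner point $\boldsymbol{C}(K)$ is at the origin. The conclusion is still valid for arbitrary $K$, because both sides are invariant under translation. Translating $K$ by a vector $v$ adds the same linear term $v\cdot(\cos\theta,\sin\theta)$ to both $p_K$ and $p_{S(K)}$, since the Steiner point is translation-equivariant and $L$ is invariant. Meanwhile $W$ involves only translation-invariant quantities, namely $\rho$, $L$, $A$, $\tilde A$, $\rho_\beta$, $\rho_e-\rho_i$, $\rho_M-\rho_m$ and $\kappa$.

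Combining the two cases, \eqref{1.10} is both $h_1$- and $h_2$-stable, hence stable.
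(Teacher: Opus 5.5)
Your proposal is correct and follows the paper's proof: the corollary is obtained by citing Theorem \ref{T2}(i) and Theorem \ref{T3}(i) for $h_1$- and $h_2$-stability respectively. The paper leaves two of your checks implicit. One is that the strict last inequality in \eqref{1.13} makes both constants finite and positive. The other is that translation invariance of $W$ and of $p_K-p_{S(K)}$ justifies the normalization $\boldsymbol{C}(K)=(0,0)$ used in those proofs.
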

\begin{proof}[proof.]
     It follows from Theorem \ref{T2} (\ref{1.2.1}) and Theorem \ref{T3} (\ref{1.3.1}) that \eqref{1.10} is both $h_1$-stable and $h_2$-stable, hence \eqref{1.10} is stable.
\end{proof}

\begin{remark}
Applying the method of \cite[Theorem 4.3]{G2} and \cite[Theorem 4.3]{LG}, we obtain stability estimate 
$$
h_2(K, S(K))^2 \leq W
$$
in Theorem \ref{T3}. But this requires
$$
3\left(6\alpha - \mu + 4\sigma + 24\eta + \frac{4}{\pi}\lambda + \frac{4}{\pi}\xi - 12\zeta\right) - 2 \geq 0,
$$
which is more restrictive than 
\begin{equation}\label{4.8}
6\pi\alpha - \pi\mu + 4\pi\sigma + 24\pi\eta + 4\lambda + 4\xi - 12\pi\zeta > 0
\end{equation}
in \eqref{1.13}.
Hence condition \eqref{4.8} represents a weaker condition for stability. 
\end{remark}

\begin{remark}\label{R}
When $\zeta = \xi = \lambda =0$, the condition \eqref{4.8} in \eqref{1.13} cannot be relaxed as in \eqref{1.9}, i.e.
$$6\pi\alpha - \pi\mu + 4\pi\sigma + 24\pi\eta + 4\lambda + 4\xi - 12\pi\zeta \geq 0.$$
If
$$
f(2)
= \frac{1}{\pi} (6\pi\alpha - \pi\mu + 4\pi\sigma + 24\pi\eta + 4\lambda + 4\xi - 12\pi\zeta) = 0,
$$
then 
$$
W = \pi\left(2\alpha + 4\pi\delta + \mu \right)a_0^2 
+ \frac\pi2
\sum_{n\geq 3}\left(n^2-1\right)
\cdot
f(n)
\cdot
\left(a_n^2 + b_n^2\right).
$$
Together with
$$
h_2\left(K, S(K)\right)^2 
= \pi\sum_{n\geq 2}\left(a_n^2+b_n^2\right)
\leq CW,
$$
any admissible constant $C$ would depend on $a_2$, $b_2$ and cannot be universal. Hence the inequality \eqref{1.10} is unstable.

\end{remark}

\begin{example}
Consider the inequalities \eqref{3.4}-\eqref{3.6} from Corollary \ref{C}.
    \begin{enumerate}[(i)]
        \item Inequality \eqref{3.4} is unstable, whereas \eqref{3.5} is stable.
        
        \item\label{2)}  \eqref{3.6} is stable for $\epsilon \in (0, \frac{1}{2}]$.
  
    \end{enumerate}
\end{example}
\begin{proof}[proof.]
\begin{enumerate}[(i)]
    \item The parameter set \eqref{3.8} satisfies $\lambda = \xi = \zeta =0$ and
        $$
            6\pi\alpha - \pi\mu + 4\pi\sigma + 24\pi\eta + 4\lambda + 4\xi - 12\pi\zeta = 0,
        $$
        By Remark \ref{R}, \eqref{3.4} is unstable. 
        In contrast, \eqref{3.9} satisfies \eqref{4.8}, because
        $$
            6\pi\alpha - \pi\mu + 4\pi\sigma + 24\pi\eta + 4\lambda + 4\xi - 12\pi\zeta
            = 26\pi - 8\pi^2 
            >0,
        $$
        which implies that \eqref{3.5} is stable.
    \item For $\epsilon \in (0, \frac{1}{2}]$, the parameter set \eqref{3.10} satisfies \eqref{4.8} and therefore \eqref{1.13}. It follows from Corollary \ref{CC} that \eqref{3.6} is stable.
    
\end{enumerate}
    
\end{proof}

The conditions $\eta, \xi, \zeta \geq 0$ and $\lambda \leq 0$ are essential in proof of Theorem \ref{T1}, giving inequalities of the form
$$
\hat{\eta} \int_0^{2\pi}\rho_{\beta}(\theta)^2\mathrm{d}\theta
+ \hat{\xi} (\rho_{M}-\rho_{m})^2
+ \hat{\zeta} \oint_{\gamma}\kappa^2\mathrm{d}s
+ B_1
\geq
 B_2 + \hat{\lambda}(\rho_e - \rho_i)^2,
$$
where $\hat{\eta}, \hat{\zeta}, \hat{\xi}, \hat{\lambda}$ are non-negative parameters, and $B_1, B_2$ are sums of non-negative geometric quantities.
We wonder whether introducing new geometric quantities can yield inequalities of the reverse form
$$
\hat{\lambda}(\rho_e - \rho_i)^2 + B_1
\geq
\hat{\eta} \int_0^{2\pi}\rho_{\beta}(\theta)^2\mathrm{d}\theta
+ \hat{\xi} (\rho_{M}-\rho_{m})^2
+ \hat{\zeta} \oint_{\gamma}\kappa^2\mathrm{d}s
+ B_2,
$$
and whether such inequalities have stability.


\end{document}